\documentclass[11pt, reqno]{amsart}
\usepackage{amsmath,amsthm,amssymb,enumerate,amsxtra}
\usepackage[colorlinks,linkcolor=citation,citecolor=citation,urlcolor=citation]{hyperref}
\usepackage{cleveref}
\usepackage[all]{xy}
\usepackage{array}

\usepackage[dvipsnames]{xcolor}
\definecolor{citation}{rgb}{0,.40,.80}

\newcommand \C {\mathbb C}

\newcommand \F {\mathbb F}

\newcommand \N {\mathbb N}
\renewcommand \O {\mathcal O}
\renewcommand \P {\mathbb P}

\newcommand \Z {\mathbb Z}

\DeclareMathOperator \Aut {Aut}

\DeclareMathOperator \Hom {Hom}

\DeclareMathOperator{\Fix}{Fix}

\newtheorem {thm} {Theorem}[section]

\newtheorem {lemma} [thm] {Lemma}
\newtheorem {prop} [thm] {Proposition}

\theoremstyle{definition}

\newtheorem {cons} [thm] {Construction}

\newtheorem {rmk}[thm] {Remark}

\numberwithin{equation}{section}

\setlength{\parskip}{1ex}

\hyphenpenalty = 1000

\makeatletter
\newcommand{\xleftrightarrow}[2][]{\ext@arrow 3359\leftrightarrowfill@{#1}{#2}}
\newcommand{\dashto}[2][]{\ext@arrow 0359\rightarrowfill@@{#1}{#2}}
\newcommand{\xdashleftarrow}[2][]{\ext@arrow 3095\leftarrowfill@@{#1}{#2}}
\newcommand{\xdashleftrightarrow}[2][]{\ext@arrow 3359\leftrightarrowfill@@{#1}{#2}}
\def\rightarrowfill@@{\arrowfill@@\relax\relbar\rightarrow}
\def\leftarrowfill@@{\arrowfill@@\leftarrow\relbar\relax}
\def\leftrightarrowfill@@{\arrowfill@@\leftarrow\relbar\rightarrow}
\def\arrowfill@@#1#2#3#4{%
  $\m@th\thickmuskip0mu\medmuskip\thickmuskip\thinmuskip\thickmuskip
   \relax#4#1
   \xleaders\hbox{$#4#2$}\hfill
   #3$%
}
\makeatother

\begin{document}

\author{Katrina Honigs}
\author{Pijush Pratim Sarmah}
\address{Department of Mathematics\\
Simon Fraser University\\
8888 University Drive\\
Burnaby, BC, V5A 1S6\\
Canada}

\title{Involutions of curves in abelian surfaces and their Jacobians}

\date{}

\begin{abstract}
We examine \'etale covers of genus two curves that occur in the linear system of a polarizing line bundle of type $(1,d)$ on a complex abelian surface. We give results counting fixed points of involutions on such curves as well as decomposing their Jacobians into isogenous products.
\end{abstract}

\maketitle

In \cite{EkedahlSerre}, Ekedahl and Serre
examine smooth, projective complex curves whose Jacobians are completely decomposable, that is, isogenous to a product of elliptic curves.
They ask if it is possible to find such curves of each genus, and if the genus of such curves is bounded.
Much progress has been made on this question --- see for example
\cite{paulsuth} for recent updates --- but it remains open.

In their paper, Ekedahl and Serre give examples of curves that are unramified abelian covers of genus $2$ curves $H$, where $JH$ is isogenous to a product of elliptic curves. We examine more generally the decomposability of the Jacobians of curves $C$ in abelian surfaces $A$ that are unramified abelian covers of genus~2 curves, though we limit our scope to curves in the linear system of a line bundle $L$ defining a primitive polarization  (\Cref{cons}).
We prove the following two general theorems on decomposing the Jacobians of these curves:

\begin{thm}[\Cref{cyclic}]\label{one}
Let $C$ be as in \Cref{cons} for
$X=\langle x \rangle\leq K(L)$ a cyclic group. We have the following isogeny relations:
\begin{enumerate}[(a)]
\item When $d$ is odd: $J_C\sim A\times J^2_{C/\langle -1 \rangle}$.
\item When $d$ is even:
  $J_C\sim A\times J_{C/\langle -1 \rangle}\times J_{C/\langle -1 \circ t_x\rangle}$.
\end{enumerate}  
\end{thm}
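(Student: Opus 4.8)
The plan is to decompose $J_C$ using the action of a finite group of automorphisms of $C$ and the standard theory of how group actions induce isogeny decompositions of Jacobians (following the approach of Ekedahl--Serre and the work on idempotents in the rational group algebra, e.g. the results of Kani--Rosen). Since $C$ lies in the linear system $|L|$ on $A$ and is an étale abelian cover of a genus~$2$ curve $H$, the curve $C$ carries the restriction of the involution $-1$ on $A$ as well as the translation $t_x$ by a point of order dividing $d$ in $K(L)$; together these generate a finite group $G$ acting on $C$. I would first pin down $G$ and its subgroups precisely: the cyclic translation part $\langle t_x\rangle$ and the involutions $-1$ and $-1\circ t_x$, noting that conjugating $-1$ by a translation twists it into $-1\circ t_{2x}$, so the structure of $G$ depends on the parity of $d$ (whether $2$ divides the order of $x$). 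This dihedral-versus-abelian dichotomy is exactly what produces the two cases (a) and (b).

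Next I would apply the isogeny decomposition of $J_C$ coming from the subgroups of $G$. For each subgroup $H'\le G$ one has the quotient curve $C/H'$ and an induced map $J_{C/H'}\to J_C$ (via pullback), and Kani--Rosen-type relations express $J_C$ up to isogeny in terms of these quotient Jacobians with multiplicities read off from the decomposition of $\Q[G]$ into its isotypic components. The key players will be $C/\langle t_x\rangle = H$ (the genus~$2$ quotient, whose Jacobian contributes the factor $A$ since $JH$ is isogenous to $A$ via the étale cover structure — indeed $A$ is the relevant abelian surface and the Prym/pullback identifies this piece), together with $C/\langle -1\rangle$ and, in the even case, $C/\langle -1\circ t_x\rangle$. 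I would carefully match the dimensions: $\dim J_C = g(C)$, which I would compute from the degree of the cover $C\to H$ and Riemann--Hurwitz (the cover being étale of degree $d$), and check that the right-hand sides have the correct total dimension, $2 + 2\cdot g(C/\langle-1\rangle)$ in the odd case and $2 + g(C/\langle-1\rangle)+g(C/\langle-1\circ t_x\rangle)$ in the even case.

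The crux is to see why in the odd case the two involutions $-1$ and $-1\circ t_x$ yield \emph{isogenous} quotient Jacobians (giving the square $J^2_{C/\langle -1\rangle}$), while in the even case they genuinely split. When $d$ is odd, multiplication by $2$ is invertible on $\langle t_x\rangle$, so $t_x = t_{2y}$ for some $y\in\langle t_x\rangle$, and then $-1\circ t_x = t_y\circ(-1)\circ t_{-y}$ is conjugate to $-1$ inside $G$; conjugate involutions have isomorphic fixed quotients, hence isogenous (indeed isomorphic) Jacobians, collapsing the two factors into $J^2_{C/\langle-1\rangle}$. When $d$ is even this conjugacy fails: $-1$ and $-1\circ t_x$ land in distinct conjugacy classes of $G$, so their quotient curves are genuinely different and both Jacobian factors survive separately. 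The main obstacle I anticipate is the bookkeeping of the group representation theory — correctly identifying the isotypic decomposition of $\Q[G]$ for the dihedral/abelian group $G$ and matching each rational idempotent to the correct quotient Jacobian with the right multiplicity, while simultaneously verifying that the ``$A$'' factor arises precisely from the sign-free part of the cover $C\to H$. Confirming that no extra factors appear (i.e.\ that the listed quotients account for all of $J_C$ up to isogeny) via a dimension count is where I would concentrate the verification.
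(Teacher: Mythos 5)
Your proposal takes essentially the same route as the paper: both apply Kani--Rosen's Theorem~B to the dihedral group $\langle -1, t_x\rangle\leq\Aut(C)$ partitioned into $\langle t_x\rangle$ and the $d$ order-two subgroups $\langle -1\circ t_{jx}\rangle$, identify $J_{C/\langle t_x\rangle}=J_H\sim A$, and use the observation that when $d$ is odd all the involutions $-1\circ t_{jx}$ are conjugate (because $x=2y$ for some $y\in\langle x\rangle$), whereas for $d$ even they split into the two conjugacy classes of $-1$ and $-1\circ t_x$. The only point you leave implicit, which the paper states, is that $C/G\cong\P^1$ so the $J_{C/G}$ term in the Kani--Rosen relation vanishes.
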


\begin{thm}[\Cref{prime}]\label{two}
Let $p$ be an odd prime and $d=p^2$.
Let $C$ be as in \Cref{cons} for
\[
(\Z/p\Z)^2\cong X\leq K(L).
\]  
Then we have the following isogeny relations, where each $C_i$ is a curve of genus $\frac{p-1}{2}$ that is a quotient of $C$ by $[-1]$ and a translation:
\[J_C \sim A \times \prod_{i=1}^{p+1} J_{C_i}^2,\quad
J_{C/\langle -1\rangle} \sim \prod_{i=1}^{p+1} J_{C_i}.\]
\end{thm}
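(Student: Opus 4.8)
The plan is to let the generalized dihedral group $G = X \rtimes \langle -1\rangle$ of $X \cong (\Z/p\Z)^2$ act on $C$, with $X$ acting by translations and $-1$ by the involution, and to run the group-algebra (isotypical) decomposition of $J_C$ in the style of Kani--Rosen and Lange--Recillas. First I would record the quotient curves in play. The quotient $C/X$ is the genus-two curve of \Cref{cons}, and the induced polarization identifies $J_{C/X} \sim A$; for each of the $p+1$ lines $\ell \subseteq X \cong \F_p^2$ put $D_\ell = \ell \rtimes \langle -1\rangle$, a dihedral group of order $2p$, and $C_\ell = C/D_\ell$ --- the quotient of $C$ by $-1$ together with a translation generating $\ell$. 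Using that $C \to C/X$ is \'etale of degree $p^2$ and that the involution induced by $-1$ on $C/\ell$ has exactly the $6$ fixed points lying over the Weierstrass points of $C/X$, a Riemann--Hurwitz computation gives $g_{C_\ell} = \tfrac{p-1}{2}$; these are the curves $C_i$ of the statement.

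The heart of the argument is the rational representation theory of $G$. Because $p$ is odd, $-1$ fixes no nontrivial character of $X$, so the complex irreducibles are the trivial character $\mathbf 1$, the sign character $\epsilon$ (trivial on $X$ and $-1$ on the reflections), and the two-dimensional representations $W_\chi = \mathrm{Ind}_X^G \chi$ indexed by pairs $\{\chi, \chi^{-1}\}$ of nontrivial characters of $X$. The Galois group $\Gal(\Q(\zeta_p)/\Q) \cong \F_p^\times$ acts by $\chi \mapsto \chi^a$, and its orbits on the nontrivial characters are precisely the $p+1$ punctured lines of $\hat X \cong \F_p^2$; dually these match the $p+1$ lines $\ell = \ker\chi \subseteq X$. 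Hence the rational irreducibles are $\mathbf 1$, $\epsilon$, and $p+1$ representations $T_\ell$, each afforded over the totally real field $\Q(\zeta_p)^+$ by the standard two-dimensional representation of $D_\ell$ by rotations and a reflection. The corresponding Wedderburn component is therefore the split algebra $M_2(\Q(\zeta_p)^+)$, and feeding this into the group-algebra decomposition yields $J_C \sim A_{\mathbf 1} \times A_\epsilon \times \prod_\ell B_\ell^2$, where the exponent $2$ comes exactly from the $M_2$ factor.

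It then remains to identify the pieces. Since $C/G = (C/X)/\langle -1\rangle \cong \P^1$ we get $A_{\mathbf 1} = 0$, and since the hyperelliptic involution of the genus-two curve $C/X$ acts as $-1$ on all of $J_{C/X}$, the sign component is $A_\epsilon \sim J_{C/X} \sim A$. To recognize the $B_\ell$ I would use the idempotent description of $J_{C/H}$ as the image of $\tfrac{1}{|H|}\sum_{h \in H} h$ acting on $J_C$, reading off its components factor by factor. For the two-dimensional constituents one computes $\dim W_\chi^{D_\ell} = 1$ when $\ell = \ker\chi$ and $0$ otherwise, so $J_{C_\ell} = J_{C/D_\ell}$ is isogenous to the single component $B_\ell$; similarly $\dim W_\chi^{\langle -1\rangle} = 1$ for every nontrivial $\chi$, so $J_{C/\langle -1\rangle}$ receives exactly one copy of each $B_\ell$. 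Combining, $B_\ell \sim J_{C_\ell}$, which turns $J_C \sim A_\epsilon \times \prod_\ell B_\ell^2$ and $J_{C/\langle -1\rangle} \sim \prod_\ell B_\ell$ into the two displayed relations.

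I expect the main obstacle to be the rational-structure bookkeeping. The representations $W_\chi$ are not individually defined over $\Q$, and it is the interaction of the $\chi \sim \chi^{-1}$ identification with the $\F_p^\times$-Galois action that packages them into the $p+1$ rational pieces with endomorphism field $\Q(\zeta_p)^+$ and \emph{split} matrix component $M_2$. This splitness is what forces $\dim B_\ell = \tfrac{p-1}{2}$ and produces the exponent $2$ in $J_C$ while giving exponent $1$ in $J_{C/\langle -1\rangle}$; ruling out a quaternionic component of the same $\Q$-dimension, and checking that the abstract component $B_\ell$ is genuinely isogenous to the geometric Jacobian $J_{C_\ell}$ and not merely equidimensional, are the delicate points that the idempotent computation above is designed to settle.
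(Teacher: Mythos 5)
Your argument is correct, but it takes a genuinely different route from the paper. The paper never computes the rational representation theory of $G=X\rtimes\langle -1\rangle$: it applies \Cref{KR} twice, to two explicit subgroup partitions. First it partitions $X\cong(\Z/p\Z)^2$ into its $p+1$ lines and reduces each quotient $C/\langle t_x\rangle$ (a curve in a $(1,p)$-polarization on $A/\langle x\rangle$) to the already-proved cyclic case \Cref{cyclic}(a), which yields $J_C\sim A\times\prod J_{C_i}^2$; second it partitions the full group $\langle t_{x_1},t_{x_2},[-1]\rangle$ into its $p^2$ order-two subgroups together with $X$, uses Sylow conjugacy to identify all the order-two quotients with $C/\langle -1\rangle$, and cancels against the first relation via uniqueness of isogeny decompositions to get $J_{C/\langle -1\rangle}\sim\prod J_{C_i}$. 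Your isotypical/group-algebra decomposition is more systematic: one Wedderburn computation produces all the quotient Jacobians simultaneously, explains the exponent $2$ conceptually as the matrix size of the split component $M_2(\Q(\zeta_p)^+)$, and does not need the cyclic case as input. What it costs is the importation of the Lange--Recillas/Carocca--Rodr\'iguez machinery (the decomposition $J_C\sim\prod B_i^{n_i}$ and the formula $J_{C/H}\sim\prod B_i^{\dim V_i^H}$), which is standard but lies outside the toolkit the paper sets up, plus the Schur-index check you rightly flag (harmless here, since the two-dimensional constituents factor through dihedral quotients $G/\ell$ of order $2p$ and are realized by real rotation--reflection matrices over $\Q(\zeta_p)^+$ --- note it is the quotient $G/\ker\chi$, not the subgroup $D_\ell$, that carries the faithful standard representation). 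Both approaches still need the same geometric input to pin down the genus of the $C_i$: the paper gets the $6$ fixed points of the induced involution on $C/\ell$ from \Cref{odd} applied to the $(1,p)$-polarized quotient, exactly as you do.
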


\Cref{two} also gives a new proof of the main result in \cite{decomposable_jacobians}, which demonstrates that there exist curves whose Jacobian is isogenous to an arbitrarily high number of Jacobians of the same dimension.
We also give a class of examples where $J_C$ contains an elliptic  factor.

Our primary method for decomposing Jacobians is applying
\cite[Theorem~B]{KaniRosen} of Kani and Rosen to the automorphism groups of these curves. In order to obtain our results on Jacobian decompositions, we
prove general results on 
 the numbers of fixed points of involutions $[-1]\circ t_x$ on $C$, which control how low the genus of a quotient curve can be.

\begin{thm}[\Cref{odd}]\label{thm.odd}
  Let $d$ be odd and $C$ a curve as in \Cref{cons}. Any involution $[-1]\circ t_x$ on $C$ has $6$ fixed points.
\end{thm}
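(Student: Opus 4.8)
The plan is to exploit the quotient isogeny $\pi\colon A\to B:=A/X$, under which $C$ is the preimage $\pi^{-1}(H)$ of the genus two quotient $H\subset B$, where $B$ carries the principal polarization descended from $L$ so that $H$ is a symmetric theta divisor. The key structural observation is that, for $x\in X=\ker\pi$, the involution $[-1]\circ t_x$ descends along $\pi$ to $[-1]$ on $B$: indeed $\pi\circ t_x=\pi$ while $\pi\circ[-1]=[-1]\circ\pi$, so $\pi\circ([-1]\circ t_x)=[-1]\circ\pi$. Consequently $\pi$ carries the fixed locus of $[-1]\circ t_x$ on $C$ into the fixed locus of $[-1]$ on $H$, and I would count the former fibrewise over the latter.

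First I would identify the fixed points of $[-1]$ on $H$. Since $[-1]$ fixes exactly the $2$-torsion $B[2]$, its fixed points on $H$ are $H\cap B[2]$; equivalently, under the identification $B\cong JH$ the involution $[-1]$ restricts on the symmetric theta divisor $H$ to the hyperelliptic involution, whose fixed points are the six Weierstrass points. Either way, $\Fix([-1])\cap H$ consists of exactly $6$ points. This is where the number $6$ enters, and it is the only input specific to genus two.

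Next I would carry out the fibre count. Fix $\bar p\in H\cap B[2]$ and a preimage $a_0\in\pi^{-1}(\bar p)$, so that $\pi^{-1}(\bar p)=a_0+X\subset C$ and $2a_0\in X$ (because $2\bar p=0$). A point $a=a_0+\xi$ with $\xi\in X$ is fixed by $[-1]\circ t_x$ precisely when $-a-x=a$, that is, when $2\xi=-x-2a_0$, an equation whose right-hand side lies in $X$. Since $C\to H$ is \'etale of degree $|X|$ with $g_C=d+1$ (from $C^2=L^2=2d$) and $g_H=2$, we have $|X|=d$; as $d$ is odd, multiplication by $2$ is a bijection of $X$, so this equation has a unique solution $\xi$. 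Hence each of the six points $\bar p$ has exactly one fixed preimage on $C$, and since preimages of distinct base points are disjoint these six fixed points are distinct. Thus $\pi$ restricts to a bijection from $\Fix([-1]\circ t_x)\cap C$ onto $H\cap B[2]$, and $[-1]\circ t_x$ has $6$ fixed points.

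The main thing to set up carefully is the descent picture together with the claim that the relevant involutions are exactly $[-1]\circ t_x$ with $x\in X$: one must check that $t_x$ preserves $C$ only for $x$ in the pullback along $\pi$ of the stabilizer of $H$, which for an irreducible principal polarization is just $X$. Granting this, the argument is a fixed-point count that is uniform in $x$, and its arithmetic heart is simply that $2$ is invertible modulo the odd integer $d$. The even case fails exactly here, since multiplication by $2$ on $X$ is no longer injective, which is what forces the different decomposition in \Cref{one}(b).
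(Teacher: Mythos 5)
Your proof is correct, but it takes a genuinely different route from the paper's. The paper's proof of \Cref{odd} chooses $y\in\langle x\rangle$ with $2y=x$ (possible because $x$ has odd order), uses \Cref{lem.fix} to identify $\Fix_C([-1]\circ t_x)$ with $\Fix_{C+y}([-1])$, and then reads off the answer from the classification of the base loci $A[2]^{\mp}$ of the eigensystems of an even symmetric line bundle with a symmetric theta structure (\Cref{dimcount} together with \Cref{eigen}(a)): since $y\in K(L)$ the translate stays in $|L|$ and the count is $\#A[2]^-=6$. You instead descend the involution along $\pi\colon A\to A/X$ to $[-1]$ on $H$, whose six fixed points are the Weierstrass points, and count fixed preimages fibre by fibre; the arithmetic heart is that multiplication by $2$ is a bijection of the odd-order group $X$, so each of the six fibres contains exactly one fixed point. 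Your argument is more elementary --- it needs nothing about symmetric theta structures or parities, only the hyperelliptic involution of a genus-$2$ symmetric theta divisor --- and it isolates exactly where oddness of $d$ enters. What the paper's machinery buys is uniformity with the even case (\Cref{even}, \Cref{cyclic.fix}, \Cref{twelve}), where your fibre count breaks down because $2\colon X\to X$ is no longer injective and one must instead decide whether $t_{-y}^*L$ admits a symmetric theta structure and determine its parity. Your closing caveat about which $t_x$ preserve $C$ is harmless but not needed here: the statement, as \Cref{odd}, quantifies over $x\in X$, and \Cref{autos} already supplies these involutions.
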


In \cite{BO}, Bor\'{o}wka and Ortega completed an explicit construction of all possible smooth hyperelliptic curves in (generic) abelian surfaces by producing a curve that is a Klein cover  of a genus $2$ curve on a
$(1,4)$-polarized abelian surface. We generalize this construction
by giving criteria for the existence of an involution $[-1]\circ t_x$ that has $12$ fixed points.

\begin{thm}[Propositions~\ref{cyclic.fix}, \ref{twelve}]
  \label{thm.even}
  Let $d$ be even and $C$ a curve as in \Cref{cons}, which is an $X$-covering of a smooth genus $2$ curve for $X\leq K(L)$.
  There is an $x\in X$ so that the involution $[-1]\circ t_x$ on $C$ has $12$ fixed points if and only if the Sylow $2$-subgroup of $X$ is not cyclic.  
\end{thm}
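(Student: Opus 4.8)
\section*{Proof proposal}

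The plan is to reduce the fixed-point count to a combinatorial problem about the $6$ two-torsion points lying on a genus~$2$ theta divisor. Write $B:=A/X$, let $\pi\colon A\to B$ be the quotient isogeny (of degree $|X|=d$), and recall from \Cref{cons} that $H\subseteq B$ is a symmetric theta divisor for the principal polarization $M$ with $L\cong\pi^*M$ and $C=\pi^{-1}(H)$. The full fixed locus of $[-1]\circ t_x$ on $A$ is the $A[2]$-torsor $T_x:=\{a\in A: 2a=-x\}$, so $\Fix([-1]\circ t_x|_C)=T_x\cap C=\{a_0\in T_x:\pi(a_0)\in H\}$. Since $x\in X=\ker\pi$ we have $2\pi(a_0)=\pi(-x)=0$, so $\pi(T_x)$ lands in $B[2]$; in fact $\pi(T_x)$ is a single coset of $V:=\pi(A[2])$ in $B[2]$. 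As $H\cap B[2]$ is exactly the set of $6$ odd two-torsion points (the images of the Weierstrass points of $H$), and every point of $T_x$ over a given $w\in H\cap B[2]$ is obtained by adding an element of $X[2]:=X\cap A[2]$, I obtain the clean formula
\[
\#\Fix([-1]\circ t_x|_C)=|X[2]|\cdot\#\bigl((H\cap B[2])\cap \pi(T_x)\bigr).
\]
First I would show that, as $x$ ranges over $X$, the coset $\pi(T_x)$ ranges over every coset of $V$ in $B[2]$. The assignment $x\mapsto\pi(T_x)$ is a homomorphism $X\to B[2]/V$ (choose $a^*$ with $2a^*=-x$ and send $x\mapsto\pi(a^*)+V$); its kernel is $2X$, and a comparison of orders shows it induces an isomorphism $X/2X\xrightarrow{\sim}B[2]/V$. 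Hence the achievable fixed-point counts are exactly $|X[2]|\cdot m_j$, where the $m_j$ are the numbers of odd two-torsion points in the various cosets of $V$.

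For \Cref{cyclic.fix} (the cyclic Sylow~$2$ case, which also covers \Cref{one}) I note that $d$ even forces $|X[2]|=2$ when the Sylow~$2$-subgroup is cyclic, so $V$ has index~$2$ and there are two cosets. Writing $V=\ker e_2(v_0,-)$ for the Weil pairing $e_2$ on $B[2]$ and a nonzero $v_0$, a short Gauss-sum computation using $q(v+v_0)=q(v)+q(v_0)+e_2(v,v_0)$, where $q$ is the (even, $\mathrm{Arf}=0$) quadratic form cutting out the $6$ odd points, shows the $6$ points split as $(2,4)$ between the two cosets. The achievable counts are therefore $\{4,8\}$, and in particular $12$ never occurs.

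For \Cref{twelve} (the non-cyclic case) the decisive point is to pin down the Weil-pairing type of $V$. Here $|X[2]|=4=|K(L)[2]|$, so $|V|=4$. Using the compatibility $e_2^M(\pi a,\pi a')=e_2^L(a,a')$ for $a,a'\in A[2]$ (which follows from $\phi_L=\hat\pi\circ\phi_M\circ\pi$ and the defining property of the dual isogeny for the Weil pairing), together with the fact that the radical of $e_2^L$ on $A[2]$ is $K(L)[2]=X[2]$, I conclude that $e_2^M$ restricts to a \emph{nondegenerate} symplectic form on $V$. Then $B[2]=V\perp V^{\perp}$ with $q=q|_V\perp q|_{V^\perp}$, and the coset count reduces to the distribution of $\{q=1\}$ over the four cosets indexed by $V^\perp$: a direct computation gives $(1,1,1,3)$ (or $(3,1,1,1)$) according to $\mathrm{Arf}(q|_V)$. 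Thus some coset contains exactly $3$ odd points, and by the surjectivity above there is an $x\in X$ with $\#\Fix([-1]\circ t_x|_C)=4\cdot3=12$.

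I expect the main obstacle to be the type computation for $V$: establishing the Weil-pairing identity $e_2^M(\pi a,\pi a')=e_2^L(a,a')$ and identifying $\mathrm{rad}(e_2^L)=K(L)[2]$, from which nondegeneracy of $e_2^M|_V$---and hence the decisive appearance of a coset with exactly three odd two-torsion points---follows. The remaining coset counts are elementary Gauss sums over $\F_2$.
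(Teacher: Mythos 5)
Your proposal is correct, and it reaches Propositions~\ref{cyclic.fix} and \ref{twelve} by a genuinely different route. The paper never counts points on $(A/X)[2]$ directly: it translates the curve by a half of $x$ (\Cref{lem.fix}) so that the involution becomes $[-1]$ on a curve in the linear system of $t_{-y}^*L$, then reads the fixed-point count off the classification of symmetric line bundles in \Cref{dimcount}, using \Cref{translate} to decide whether $t_{-y}^*L$ admits a symmetric theta structure and \Cref{eigen} together with the coset analysis of \cref{parity} to decide its parity; the resulting trichotomy $8/4/12$ is packaged in \Cref{even}, and the two propositions are then proved by exhibiting, via element orders in $K(L)$, a suitable $y$ in each case. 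You instead push the fixed locus $T_x=\{a:2a=-x\}$ down to $(A/X)[2]$, observe that $\#\Fix_C([-1]\circ t_x)=|X[2]|\cdot\#\bigl(\pi(T_x)\cap H\bigr)$, and reduce everything to counting odd two-torsion points of the genus-$2$ theta divisor in cosets of $V=\pi(A[2])$ --- an $\F_2$ quadratic-form computation. What your approach buys is self-containedness: you avoid symmetric theta structures and the eigenspace dimensions of \Cref{dimcount} entirely, and your isomorphism $X/2X\cong B[2]/V$ establishes in one stroke which cosets (hence which counts) are realized, something the paper handles case by case. What the paper's approach buys is the connection to the sts framework: your $(2,4)$ split in the cyclic case is exactly the ``no sts'' case of \Cref{dimcount}, and the nondegeneracy of $e_2^M|_V$ that you isolate as the decisive point is precisely the non-isotropy condition discussed in the remark following \cref{14invol}; indeed your final coset count is essentially the computation the paper performs in \cref{parity}, so the two arguments converge at the end. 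The only caveat is that your write-up is a plan rather than a finished proof, but each step you defer --- the compatibility $e_2^M(\pi a,\pi a')=e_2^L(a,a')$, the identification of the radical of $e_2^L$ on $A[2]$ with $K(L)[2]=X[2]$, and the Gauss-sum coset counts --- is stated correctly and goes through exactly as you indicate.
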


\subsection*{Outline}
In  \cref{sec_cons}, we begin  by presenting the curves $C$ that we will study.
In \cref{sec_lb}, we relate fixed points of involutions $[-1]\circ t_x$ on $C$ to translations of the line bundle $L$, and conclude by proving \Cref{thm.odd}.
In \cref{sec_invol}, we prove
\Cref{thm.even}. We then take the opportunity to briefly review the smooth hyperelliptic curves in $|L|$; when $d=2,4$, they arise from curves made with \Cref{cons}.
In \cref{sec.decomp}, we give our results on  Jacobian splittings,
and recover some of the results of Ekedahl and Serre using \cite[Theorem~B]{KaniRosen} and our results on automorphisms.

\subsection*{Notation}
The polarization isogeny given by $L$ is denoted by $\varphi_L:A\to A\spcheck$, $a\mapsto t_a^*L\otimes L^{-1}$ and we write its kernel as:
$$K(L):=\ker\varphi_L\cong (\Z/d\Z)^2.$$
We write the involution given by the inverse group law map on $A$ as $[-1]$.
We denote the set of fixed points of an involution $\iota$  on $C$ by  $\Fix_C(\iota)$ and their number by $\#\Fix_C(\iota)$.

\subsection*{Acknowledgements} The authors thank Jen Paulhus and  Nils Bruin for helpful conversations, as well as P.~Bor\'{o}wka, particularly for showing us how to extend our methods to some of the examples in \cref{JC4}.
K.~H.\ is supported by an NSERC Discovery Grant.

\section{Construction}\label{sec_cons}

We now construct the curves that we will analyze in this paper.
Let $A$ be an abelian surface that is 
$(1,d)$-polarized by a N\'eron--Severi class $l$.

Ample line bundles that have the same N\'eron--Severi class differ by translation. 
However, because we wish to identify the number of ramification points of specific automorphisms of curves in our exposition, we will fix a particular line bundle representative. Considering $A$ as a complex torus $V/\Lambda$, we fix a decomposition $V_1\oplus V_2$ for $l$ and select the line bundle $L$ of characteristic $0$ (see \cite[\S3.1]{BirkenhakeLange}) with respect to this decomposition.

\begin{cons}\label{cons}
  Let $X$ be a subgroup of $K(L)$ of order $d$ and consider the quotient $\pi:A\to A/X$. By \Cref{lem.pol} below, $A/X$ is principally polarized by
  the line bundle $\pi_*L=:M$. 
Furthermore by \cite[Prop.~6.5.2]{BirkenhakeLange}, we may choose 
a decomposition of $A/X$ for the N\'eron--Severi class of $M$
that is compatible with the decomposition of $A$ for $l$
(replacing the decomposition of $A$ if needed)  and so that both 
$L$ and $M$ have characteristic $0$ with respect to these decompositions. 

If $A/X$ is the Jacobian of a smooth genus $2$ curve (for instance if $A$ is simple this will be the case for any choice of $X$), then the curve
we construct is the preimage $C:=\pi^{-1}(H)$ in $|L|$. 
\end{cons}

\begin{rmk}\label{autos}
For each $x\in X$, the translation $t_x$ is an automorphism of the
curve $C$. Since $[-1]$ is an involution of $H$, it is an involution of $C$ as well. Thus $\Aut(C)$ contains a subgroup isomorphic to $\langle -1 \rangle \rtimes X$
whose elements are of the form  $[-1]\circ t_x$ and $t_x$ for $x\in X$.
\end{rmk}

\begin{lemma}\label{lem.pol}
  Let $X$ be a subgroup of $K(L)$ of order dividing $d$ and consider the quotient isogeny $\pi:A\to A/X$.
Then $A/X$ is $(1,\frac{d}{|X|})$-polarized by the pushforward of the polarization $\varphi_L$ by $\pi$.
\end{lemma}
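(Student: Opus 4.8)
The plan is to pass to the complex-analytic description $A = V/\Lambda$, with $L$ corresponding to a positive definite Hermitian form $H$ on $V$ whose imaginary part $E := \operatorname{Im} H$ is integral on $\Lambda$ of type $(1,d)$, and to realize $A/X$ as $V/\Lambda'$ for the overlattice $\Lambda \subseteq \Lambda' \subseteq \Lambda(L)$ with $\Lambda'/\Lambda = X$. The pushforward of $\varphi_L$ is then the polarization attached to the \emph{same} form $H$, now viewed on $\Lambda'$ (it stays positive since $H$ is unchanged); the content of the lemma is that $H$ really does define a polarization there, i.e.\ that $E$ remains integral on $\Lambda'$, and that its type is $(1, d/|X|)$. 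First I would record that $E|_{\Lambda'}$ is integral precisely when $X$ is isotropic for the Weil pairing $e^L$ on $K(L) \cong (\Z/d\Z)^2$, so the whole problem reduces to an isotropy statement together with a computation of elementary divisors.

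The main step --- and the place where the hypothesis that $|X|$ divides $d$ is used --- is to prove that every such $X$ is automatically isotropic. I would fix a symplectic basis $f_1, f_2$ of $K(L)$ with $e^L(f_1, f_2) = \frac1d \bmod \Z$, write $X \cong \Z/a\Z \times \Z/b\Z$ with $a \mid b$ and $|X| = ab$, and take generators $u = \alpha f_1 + \beta f_2$ and $v = \gamma f_1 + \delta f_2$ of orders $b$ and $a$. Since $e^L$ is alternating, isotropy of $X$ comes down to the single cross term $e^L(u,v)$ (the cyclic case being immediate). Expressing the orders of $u$ and $v$ through $\gcd(\alpha,\beta,d) = d/b$ and $\gcd(\gamma,\delta,d) = d/a$ and substituting, one finds $e^L(u,v) = \frac{d}{ab}\,N \bmod \Z$ for an integer $N$; because $ab = |X|$ divides $d$ the prefactor $\frac{d}{ab}$ is an integer, so $e^L(u,v) \in \Z$ and $X \subseteq X^{\perp}$. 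I expect this to be the one genuinely load-bearing computation, and the clean cancellation is exactly what the divisibility hypothesis buys.

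Granting isotropy, descent of polarizations (e.g.\ the theory of \cite[\S6.3]{BirkenhakeLange}) produces the line bundle $M = \pi_* L$ on $A/X$ with $\pi^* M = L$, and this $M$ carries the pushforward polarization $\varphi_M$ characterized by $\pi\spcheck \circ \varphi_M \circ \pi = \varphi_L$, where $\pi\spcheck$ is the dual isogeny. To finish I would pin down the type $(e_1, e_2)$, with $e_1 \mid e_2$, in two independent steps. For $e_1 = 1$: a symplectic basis of $\Lambda$ adapted to the $(1,d)$ type provides vectors $\lambda_1, \mu_1 \in \Lambda \subseteq \Lambda'$ with $E(\lambda_1, \mu_1) = 1$, so the gcd of the values of $E|_{\Lambda'}$, which is the first elementary divisor, equals $1$. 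For the product $e_1 e_2$: taking degrees in $\pi\spcheck \circ \varphi_M \circ \pi = \varphi_L$ gives $|X|^2 \deg \varphi_M = \deg \varphi_L = d^2$, whence $|K(M)| = (d/|X|)^2$ and $e_1 e_2 = d/|X|$; equivalently one identifies $K(M) \cong X^{\perp}/X$ directly from the lattice description $\Lambda(M)/\Lambda = X^{\perp}$. Combining $e_1 = 1$ with $e_1 e_2 = d/|X|$ yields type $(1, d/|X|)$, as claimed.
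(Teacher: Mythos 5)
Your proof is correct and takes essentially the same route as the paper: the load-bearing step in both is the verification that $X$ is isotropic for $e^L$ via the same divisibility argument on the orders of two generators (the paper phrases it in the decomposition $\Z/d\Z\times\mu_d$ of $K(L)$, you in a symplectic basis with values in $\frac{1}{d}\Z/\Z$), followed by descent of the polarization along $\pi$. Your explicit determination of the type --- $e_1=1$ from $E(\lambda_1,\mu_1)=1$ on the sublattice $\Lambda\subseteq\Lambda'$, and $e_1e_2=d/|X|$ from degrees in $\pi\spcheck\circ\varphi_M\circ\pi=\varphi_L$ --- is a welcome expansion of what the paper leaves as ``an analysis of the kernels.''
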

  
\begin{proof}
First, we note $\ker(\pi)\subseteq \ker(\varphi_L)$.
In order to take the pushforward of $\varphi_L$ by $\pi$, we must check that $X$ is isotropic with respect to the commutator pairing $e^L$ of
the theta group of $L$.
We may write a decomposition of $K(L)$ with respect to this pairing as $\Z/d\Z\times \mu_d$, noting $\mu_d\simeq \Hom(\Z/d\Z,\C^*)$, with generators $1$ and $\zeta$ (see \cite[\S6.3]{BirkenhakeLange}, \cite[(8.21)]{EVM}).
Then:
\[
e^L((x,\chi),(x',\chi'))=\chi'(x)\chi(x')^{-1}.
\]  
As a subgroup of $K(L)$, $X$ is generated by some $(a,\zeta^{\alpha})$,
 $(b,\zeta^{\beta})$ of orders $d_1$, $d_2$ so that $d_1d_2=|X|$.
The pairing of these generators is $\zeta^{a\beta-b\alpha}=1$: since $d_1a$ and $d_2\beta$ are both multiples of $d$, $a\beta$ is a multiple of $d$ (as is $b\alpha)$. We may use similar reasoning to show that any two elements of $X$ pair trivially.

The pushforward polarization $\pi_*\varphi_L:A/X\to (A/X)\spcheck$ fills in the following diagram and corresponds to 
the line bundle $\pi_*L$ \cite[Corollary, p.~231]{Mumford} (moreover, $L\simeq \pi^*\pi_*L$):
\begin{equation}\label{pushforward}
  \vcenter{
\xymatrix{A\ar^{\varphi_L}[r]\ar^{\pi}[d]&A\spcheck\\
A/X\ar_{\varphi_{\pi_*L}}[r]&(A/X)\spcheck\ar^{\pi\spcheck}[u]
}    }
\end{equation}
That $\varphi_{\pi_*L}$ is a $(1,\frac{d}{|X|})$-polarization follows, for instance,  from an analysis of the kernels in the above diagram.
\end{proof}

\begin{rmk}
\Cref{lem.pol} does not hold for $X$ where $|X|>d$ since any maximal groups isotropic with respect to $e^L$ is of order $d$ \cite[p.~233, Thm.~4]{Mumford}.
\end{rmk}

\section{Symmetric linear systems and ramification of $[-1]$}\label{sec_lb}

In this section, we analyze
the number of fixed points of involutions $[-1]\circ t_x$ 
by relating them to properties of symmetric line bundles. We begin by reviewing properties of such line bundles.

Let $A$ be an  abelian surface that is 
$(1,d)$-polarized by a 
symmetric line bundle $L$. 

Since $L$ is symmetric, there is an isomorphism $[-1]^*L\simeq L$ and thus $[-1]^*$ is a linear involution on
 $H^0(A,L)$. 
 Its eigenvalues are $1$ and $-1$ and we denote the associated eigenspaces by
 $H^0(A,L)^+$ and $H^0(A,L)^-$, respectively.
 The involution $[-1]$ of $A$ restricts to an involution of
 each curve whose class is  contained in $H^0(A,L)^{\pm}$.

The dimensions of $H^0(A,L)^{\pm}$ are affected by the parity of $L$ and whether it has a symmetric theta structure (sts).

The Weil pairing on $A$ gives a non-degenerate strictly alternating form on $A[2]$ as an $\F_2$ vector space. The quadratic forms associated to this pairing form a principal homogeneous space over $A[2]$ and correspond to the symmetric line bundles with  N\'eron--Severi class $[L]$. These line bundles are called even or odd depending on the parity of the corresponding quadratic form.
We examine further specifics on determining this parity in \cref{parity}.

A symmetric theta structure 
 is an isomorphism between the extended theta and extended Heisenberg groups. Such an isomorphism exists if and only if a constraint on the dimension of 
$h^0(A,L)^+$ or $h^0(A,L)^-$
is satisfied
(maximality, see \cite[Exercise~6.10(10)]{BirkenhakeLange}).

A line bundle chosen to have characteristic $0$ with respect to a decomposition, as in \Cref{cons} has an sts and is even.

The following proposition summarizes the dimensions and base loci of $H^0(A,L)^\pm$.
The base loci are contained in $A[2]$
and we call them $A[2]^\mp$, named for the points where the
quadratic form associated to $L$
takes the values $-1$ and $+1$, respectively.
For any smooth curve $C\in H^0(A,L)^{\pm}$, $\Fix_C[-1]=A[2]^{\mp}$.

\begin{prop}[{{\cite[Prop.\ 4.7.5, Exercise~6.10(10)]{BirkenhakeLange}}}]\label{dimcount}
Let $A$ be an abelian surface that is $(1,d)$-polarized by a symmetric line bundle $L$. 

If $d$ is odd, then $L$ has an sts and the linear eigensystems of $L$ under the action of $[-1]^*$ have the following properties:
\begin{align*}
&\text{$L$ even:}\quad
& h^0(A,L)^+&=\tfrac{d+1}{2},
  &h^0(A,L)^-&=\tfrac{d-1}{2},\\
& &\#A[2]^-&=6, & \#A[2]^+&=10.\\[1ex]
&\text{$L$ odd:}\quad
& h^0(A,L)^+&=\tfrac{d-1}{2},
  &h^0(A,L)^-&=\tfrac{d+1}{2},\hspace*{1.5cm}\\
& &\#A[2]^-&=10, & \#A[2]^+&=6.
\end{align*}
If $d$ is even, we distinguish more cases, depending on whether $L$ has an sts.
\begin{align*} 
&\text{$L$ sts, even: }\quad &h^0(A,L)^+&=\tfrac{d}{2}+1,& h^0(A,L)^-&=\tfrac{d}{2}-1.\\
&&\# A[2]^-&=4,
                                        & \#A[2]^+&=12.
%\end{align*}                                                    
 \\
%\begin{align*}
&\text{$L$ sts, odd: }\quad &h^0(A,L)^+&=\tfrac{d}{2}-1,& h^0(A,L)^-&=\tfrac{d}{2}+1,\\
&&\# A[2]^-&=12,
& \#A[2]^+&=4.\\
&\text{$L$ no sts: }
  \quad &h^0(A,L)^+&=\tfrac{d}{2},& h^0(A,L)^-&=\tfrac{d}{2},\\
&&\# A[2]^-&=8,
& \#A[2]^+&=8.  
\end{align*}
\end{prop}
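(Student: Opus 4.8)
The plan is to pin down all four quantities $h^0(A,L)^{\pm}$ and $\#A[2]^{\pm}$ from three linear relations together with one genuine input coming from the quadratic form of $L$. Since $L$ is ample we have $H^i(A,L)=0$ for $i>0$, so Riemann--Roch gives $h^0(A,L)=d$ and hence $h^0(A,L)^+ + h^0(A,L)^- = d$. The base loci $A[2]^{\pm}$ partition $A[2]$, so $\#A[2]^+ + \#A[2]^- = 16$. For the third relation I would apply the holomorphic Lefschetz fixed point formula to the involution $[-1]$ together with its canonical lift $[-1]^*\colon [-1]^*L\xrightarrow{\sim}L$: the fixed locus is exactly $A[2]$, at each such point the differential of $[-1]$ is $-\operatorname{id}$ so that $\det(1-d[-1])=4$, and the trace on the fibre $L_p$ is the normalized scalar $e^L_*(p)=\pm 1$. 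As the higher cohomology vanishes, this reads
\[
h^0(A,L)^+ - h^0(A,L)^- \;=\; \operatorname{tr}\bigl([-1]^*\mid H^0(A,L)\bigr) \;=\; \tfrac14\sum_{p\in A[2]} e^L_*(p)\;=\;\tfrac14\bigl(\#A[2]^+-\#A[2]^-\bigr).
\]

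Next I would identify the base loci with level sets of $e^L_*$. For $C=\{s=0\}$ with $s\in H^0(A,L)^{\pm}$ and $p\in A[2]$, comparing $([-1]^*s)(p)=e^L_*(p)\,s(p)$ with $[-1]^*s=\pm s$ forces $s(p)=0$ whenever $e^L_*(p)=\mp 1$, while a generic eigensection is nonzero at the remaining points. Hence $A[2]^{\mp}=\{p:e^L_*(p)=\mp 1\}$ are exactly the base loci of $H^0(A,L)^{\pm}$ (and equal $\Fix_C[-1]$ for smooth $C$), so the displayed Lefschetz identity reduces everything to counting the two level sets of $e^L_*$.

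The remaining input is that $q_L$, defined by $e^L_*=(-1)^{q_L}$, is a quadratic form on $A[2]\cong\F_2^4$ whose associated bilinear form is the reduction modulo $2$ of the alternating form $E=\operatorname{Im}H$ of the polarization. In a symplectic basis of $\Lambda$ this form has elementary divisors $1,d$, so its mod-$2$ reduction on $A[2]=\tfrac12\Lambda/\Lambda$ is nondegenerate when $d$ is odd and has radical $R=A[2]\cap K(L)\cong\F_2^2$ when $d$ is even. When $d$ is odd, $q_L$ is a nondegenerate quadratic form on $\F_2^4$ whose Arf invariant records the parity of $L$; its zero set has $2^3+2=10$ points when $L$ is even (Arf $0$) and $2^3-2=6$ when $L$ is odd, which with the two relations above gives all four numbers. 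When $d$ is even I would split on $q_L|_R$: a symmetric theta structure exists precisely when $q_L|_R\equiv 0$, in which case $q_L$ descends to a nondegenerate quadratic form on $A[2]/R\cong\F_2^2$ whose two Arf types give $3$ or $1$ zeros, hence $\#A[2]^+\in\{12,4\}$ after multiplying by $|R|=4$; and when $q_L|_R\not\equiv 0$ (no sts) each of the four cosets of $R$ contributes two zeros and two ones, giving $\#A[2]^{\pm}=8$. In every case the dimensions then follow from $h^0(A,L)^++h^0(A,L)^-=d$ and the Lefschetz identity.

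The main obstacle is this last structural input: proving that $q_L$ is genuinely quadratic with associated form $E\bmod 2$ and radical $K(L)[2]$, and identifying the existence of a symmetric theta structure with the vanishing $q_L|_R\equiv 0$ (the maximality/isotropy condition on the radical). Both statements live at the level of the theta (Heisenberg) group and the descent of $L$ along $A\to A/R$, and this is exactly where I would invoke the theta-structure computations of \cite[\S4.6, Exercise~6.10(10)]{BirkenhakeLange}; granting them, the counts above are purely combinatorial.
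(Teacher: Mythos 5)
The paper does not prove this proposition; it is quoted directly from Birkenhake--Lange, so there is no in-paper argument to compare against. Your reconstruction is correct and follows essentially the same route as the cited source: the inverse/holomorphic Lefschetz trace formula giving $h^0(A,L)^+-h^0(A,L)^-=\tfrac14\bigl(\#A[2]^+-\#A[2]^-\bigr)$, combined with the Arf-invariant count of zeros of the quadratic form $q_L$ on $A[2]$ (with radical $K(L)\cap A[2]$ when $d$ is even), and all of your case-by-case arithmetic checks out against the stated table. The only caveat is that the equivalence ``$L$ has an sts $\Leftrightarrow q_L$ vanishes on the radical'' that you invoke is itself essentially the content of the cited Exercise~6.10(10), so your argument is not independent of the source at that point --- but you flag this explicitly, and it is an acceptable input given that the proposition is itself a citation.
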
  

Given a curve $C$ as in \Cref{cons}, 
for each $x\in X$, $[-1]\circ t_x$ is an involution of $C$.
The following lemma shows that the fixed locus of $[-1]\circ t_x$ is in bijection with the fixed locus of $[-1]$ acting on a translated curve $C+y$, which lies in the linear system of $t_{-y}^*L$.
Subsequent results in this section allow
 us to determine the properties of
$t_{-y}^*L$ so that we may apply \Cref{dimcount} to find $\#\Fix_C([-1]\circ t_x)$.

\begin{lemma}\label{lem.fix}
Let $C$ be a curve contained in an abelian surface $A$.
Suppose there are $x,a\in A$ so that $[-1]\circ t_{a+x}$ is an involution of $C$.
  For any $y\in A$ such that $2y=x$,
$[-1]\circ t_a$ is an involution of the translated curve $C+y$ and
  there is a bijection of sets:
  \[
\Fix_C(-1\circ t_{x+a}) \leftrightarrow \Fix_{C+y}(-1\circ t_a).
\]
\end{lemma}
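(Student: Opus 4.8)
The plan is to realize both involutions as conjugates of one another by the translation automorphism $t_y\colon z\mapsto z+y$, which maps $A$ isomorphically to itself and restricts to an isomorphism $C\xrightarrow{\sim}C+y$. First I would compute the conjugate $t_y\circ([-1]\circ t_{a+x})\circ t_{-y}$ directly from the group law. Tracking a point $w\in A$ through $t_{-y}$, then $t_{a+x}$, then $[-1]$, then $t_y$ yields $w\mapsto -w+2y-a-x$, and substituting the hypothesis $2y=x$ collapses this to $w\mapsto -w-a=([-1]\circ t_a)(w)$. Thus
\[
t_y\circ([-1]\circ t_{a+x})\circ t_{-y}=[-1]\circ t_a,
\]
which is the single computation the whole lemma rests on.

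From this identity the remaining assertions are formal. Since $[-1]\circ t_{a+x}$ preserves $C$ and $t_y(C)=C+y$, conjugation by $t_y$ shows that $[-1]\circ t_a$ preserves $C+y$; being an involution is preserved under conjugation (or one checks $([-1]\circ t_a)^2=\id$ directly), so $[-1]\circ t_a$ is indeed an involution of $C+y$. For the bijection of fixed loci, I would simply restrict $t_y$: a point $z\in C$ is fixed by $[-1]\circ t_{a+x}$ if and only if its image $z+y\in C+y$ satisfies $([-1]\circ t_a)(z+y)=t_y\big(([-1]\circ t_{a+x})(z)\big)=z+y$, i.e.\ if and only if $z+y$ is fixed by $[-1]\circ t_a$. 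Hence $t_y$ carries $\Fix_C([-1]\circ t_{a+x})$ bijectively onto $\Fix_{C+y}([-1]\circ t_a)$.

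There is no serious obstacle here: the content is entirely the one group-law computation, and the only point requiring care is bookkeeping the order of composition and the sign introduced by $[-1]$ so that the term $2y$ appears and can be replaced by $x$. Note in particular that the argument uses nothing about $C$ beyond its being a subvariety stable under $[-1]\circ t_{a+x}$---no smoothness, ampleness, or genus hypothesis enters---so the lemma holds at the level of automorphisms of a translated subvariety of $A$.
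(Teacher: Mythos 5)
Your proof is correct and is essentially the paper's argument: the paper also takes the bijection to be translation by $y$, verifies elementwise that $[-1]\circ t_a$ preserves $C+y$, and observes that the two fixed-point equations coincide once $2y=x$ is substituted. Your packaging of the computation as the conjugation identity $t_y\circ([-1]\circ t_{a+x})\circ t_{-y}=[-1]\circ t_a$ is a slightly cleaner way to organize the same group-law calculation, but it is not a different route.
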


\begin{proof}
For any point $c\in C$, by our assumptions, $-c-x-a\in C$.
Any point in $C+y$ may be written as $c+y$ for some $c\in C$. Applying $[-1]\circ t_a$ gives $-c-y-a$, and we note $(-c-y-a)-y=-c-a-x\in C$, and thus $-c-y-a$ is in $C+y$.

Any point $c\in C$ is fixed by $[-1]\circ t_{x+a}$ if $c=-c-x-a$ and
$c+y$ is fixed by $[-1]\circ t_a$ if $c+y=-c-y-a$.
Since $2y=x$, these conditions are identical. Thus the bijection between these fixed loci is given by translation by $y$.
\end{proof}

\begin{lemma}\label{translate}
\begin{enumerate}[(a)]
\item For any $y\in A$, $t_y^*L$ is symmetric if and only if $2y\in K(L)$.
\item If $L$ has an sts, then
  for any $y\in A$, $t_y^*L$ has an sts if and only if $t_y^*L\simeq t_z^*L$ (i.e. $\varphi_L(y)=\varphi_L(z)$) for some $z\in A[2]$.
\end{enumerate}  
\end{lemma}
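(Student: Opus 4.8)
\emph{Part (a)} is a direct computation with translates. As maps $A\to A$ one has $t_y\circ[-1]=[-1]\circ t_{-y}$, so using $[-1]^*L\simeq L$ gives $[-1]^*t_y^*L\simeq t_{-y}^*L$. Hence $t_y^*L$ is symmetric if and only if $t_{-y}^*L\simeq t_y^*L$, which by the defining relation $t_a^*L\otimes L^{-1}=\varphi_L(a)$ in $\Pic^0(A)$ is equivalent to $\varphi_L(y)=\varphi_L(-y)$, i.e. $\varphi_L(2y)=0$, i.e. $2y\in K(L)$. The only point requiring care is the identity $t_y\circ[-1]=[-1]\circ t_{-y}$.

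For \emph{part (b)} I would first reduce to the case $2y\in K(L)$. Since an sts presupposes that the bundle is symmetric, the left-hand side forces $2y\in K(L)$ by part (a); and if $\varphi_L(y)=\varphi_L(z)$ with $z\in A[2]$, then $\varphi_L(2y)=2\varphi_L(z)=\varphi_L(2z)=0$, so the right-hand side forces it too. Outside this case both sides fail and the equivalence is vacuous. Assuming $2y\in K(L)$, I set $\alpha:=\varphi_L(y)\in A\spcheck[2]=\Pic^0(A)[2]$, so that $t_y^*L\simeq L\otimes\alpha$; as $y$ ranges over $\varphi_L^{-1}(A\spcheck[2])$ these twists exhaust the $A\spcheck[2]$-torsor of symmetric bundles with N\'eron--Severi class $[L]$, and the task is to detect which carry an sts.

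The engine is the quadratic function $q_L\colon A[2]\to\{\pm1\}$ of a symmetric $L$, whose level sets are the sets $A[2]^{\pm}$ of \Cref{dimcount}, whose associated bilinear form is the commutator pairing $e^L|_{A[2]}$, and which transforms under twisting by the classical rule $q_{L\otimes\alpha}=q_L\cdot\chi_\alpha$ with $\chi_\alpha(e)=\langle\alpha,e\rangle$ the Weil pairing (see \cite{BirkenhakeLange,Mumford}). Let $R:=A[2]\cap K(L)=K(L)[2]$ be the radical of $e^L|_{A[2]}$; it is $0$ for $d$ odd and $\cong(\Z/2\Z)^2$ for $d$ even. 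The crux is the equivalence \emph{$L\otimes\alpha$ has an sts $\iff q_{L\otimes\alpha}|_R$ is trivial}, which I would read off from \Cref{dimcount}: for $d$ odd there is nothing to prove (every symmetric bundle has an sts and $R=0$), while for $d$ even, along each coset of $R$ the values of $q_{L\otimes\alpha}$ are scaled by the character $q_{L\otimes\alpha}|_R$, so a trivial character yields the $4/12$ splitting of the two sts rows and a nontrivial one yields the balanced $8/8$ splitting of the no-sts row.

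Since $L$ itself has an sts we have $q_L|_R\equiv1$, whence $q_{L\otimes\alpha}|_R=\chi_\alpha|_R$, so $L\otimes\alpha$ has an sts exactly when $\alpha$ annihilates $R$, i.e. $\alpha\in R^{\perp}$. Finally I would identify $R^{\perp}=\varphi_L(A[2])$: both have order $16/|R|$, and the inclusion $\varphi_L(A[2])\subseteq R^{\perp}$ is immediate because $\langle\varphi_L(z),r\rangle=e^L(z,r)=1$ for $z\in A[2]$ and $r\in R$, as $R$ is the radical of $e^L|_{A[2]}$. Translating back through $\alpha=\varphi_L(y)$ gives precisely ``$t_y^*L$ has an sts iff $\varphi_L(y)=\varphi_L(z)$ for some $z\in A[2]$''. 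The main obstacle is the italicized equivalence relating an sts to the triviality of $q_L$ on the radical $R=K(L)[2]$ --- the degenerate directions that appear exactly when $d$ is even; pinning down the transformation law $q_{L\otimes\alpha}=q_L\cdot\chi_\alpha$ and the identification of the bilinear form of $q_L$ with $e^L$ precisely enough to extract the $4/12$ versus $8/8$ dichotomy from \Cref{dimcount} is where the real care is needed.
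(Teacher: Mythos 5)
Your proof is correct, but it is organized quite differently from the paper's. For part (a), the paper proves only the ``if'' direction, via the chain $t_y^*t_y^*L\simeq t_{2y}^*L\simeq L\Rightarrow t_y^*L\simeq t_{-y}^*L\Rightarrow [-1]^*t_y^*L\simeq t_y^*L$, and then obtains the converse by counting: the group $\{y: 2y\in K(L)\}/K(L)$ has order $16=|A[2]|$, so these $y$ already account for all $16$ symmetric bundles in the class. Your direct biconditional $t_y^*L$ symmetric $\iff \varphi_L(y)=\varphi_L(-y)\iff 2y\in K(L)$ is cleaner and avoids the counting step. For part (b), the paper simply defers to the proof of \cite[Theorem~6.9.5]{BirkenhakeLange} and records the consequence that the bundles with an sts correspond to the cosets $A[2]/(K(L)\cap A[2])$; you instead derive the statement from \Cref{dimcount}: the twist rule $q_{L\otimes\alpha}=q_L\cdot\chi_\alpha$ together with the coset-of-the-radical argument shows that $\#A[2]^-\in\{4,12\}$ iff $q_{L\otimes\alpha}|_R$ is trivial iff $\alpha\in R^{\perp}=\varphi_L(A[2])$, and since the three cases of \Cref{dimcount} are exhaustive with disjoint values of $\#A[2]^-$ for sts versus no sts, this is exactly the dichotomy needed. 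That is a legitimate, self-contained route (given \Cref{dimcount}, which the paper has already imported). One notational caveat: the bilinear form associated to $q_L$ is not literally ``$e^L|_{A[2]}$'' --- the commutator pairing $e^L$ is defined on $K(L)$, which need not contain $A[2]$. The correct object is $b(x,y)=e_2(x,\varphi_L(y))$, whose radical is indeed $K(L)\cap A[2]$ by the order count $|\varphi_L(A[2])^{\perp}|=|K(L)\cap A[2]|$; this is in fact what your final computation $\langle\varphi_L(z),r\rangle=1$ uses, so the slip is purely one of labeling and does not affect the argument.
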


\begin{proof}
(a) Since $L$ is symmetric and $2y=x$, then $t_y^*L$ is symmetric:
\[
t_y^*t_y^*L\simeq t_x^*L\simeq L \Rightarrow
t_y^*L\simeq t_{-y}^*L \Rightarrow 
[-1]^*t_y^*L\simeq
[-1]^*t_{-y}^*L\simeq
t_{y}^*[-1]^*L\simeq
t_{y}^*L
\]    
The elements of the quotient group $\langle y \!\mid\! 2y\in K(L) \rangle/K(L)$
correspond to distinct line bundles $t_y^*L$ and
has order $|A[2]|$. Thus we obtain all $16=|A[2]|$ symmetric line bundles in this way.

(b) See the proof of \cite[Theorem~6.9.5]{BirkenhakeLange}.
Moreover, the symmetric line bundles admitting symmetric theta structures correspond to the cosets $A[2]/(K(L)\cap A[2])$. 
\end{proof}

\begin{lemma}\label{eigen}
Let $C$ be a curve produced using \Cref{cons} with the group $X\leq K(L)$.
\begin{enumerate}[(a)]
\item $[C]\in H^0(A,L)$ has eigenvalue $1$ with respect to action of $[-1]^*$.
\item Let $a\in A$ such that $2a\in X$ and let $M$ be the principal polarization of $A/X$ produced in \Cref{cons}.
  The eigenvalue of $[C-a]\in H^0(A,t_a^*L)$ is
$1$ or $-1$ if $t_{\pi(a)}^*M$ is even or odd, respectively.
\end{enumerate}
\end{lemma}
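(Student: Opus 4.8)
The plan is to deduce both parts from one observation: the curve $C$ and each translate $C-a$ are pulled back via the isogeny $\pi\colon A\to A/X$ from (a translate of) the theta divisor of the principal polarization $M$, and the map $\pi^*$ respects the action of $[-1]^*$ once it is normalized at the origin, hence preserves the parity of a section. The only genuine care is needed in this last equivariance statement, because the eigenvalue of a section of a symmetric line bundle is defined only relative to a chosen isomorphism $[-1]^*N\simeq N$; I will use the canonical one normalized to be the identity on the fiber over $0$.

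For (a), recall from \Cref{cons} that $M$ has characteristic $0$, hence is even, and that $H\in|M|$ is its symmetric theta divisor. Applying \Cref{dimcount} to $M$ with $d=1$ gives $h^0(A/X,M)^+=1$ and $h^0(A/X,M)^-=0$, so the unique section $s$ cutting out $H$ is even. Since $\pi$ is a homomorphism it commutes with $[-1]$, and $C=\pi^{-1}(H)$ is cut out by $\pi^*s\in H^0(A,\pi^*M)=H^0(A,L)$; the equivariance below then gives that $[C]=\pi^*s$ has eigenvalue $+1$.

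For (b) the argument is the same after a translation. As $2a\in X\subseteq K(L)$, \Cref{translate}(a) shows $t_a^*L$ is symmetric, so $[-1]^*$ acts on $H^0(A,t_a^*L)$. The divisor $C-a=t_a^{-1}(C)$ is $[-1]$-invariant: since $[-1](C)=C$ and $C+2a=t_{2a}(C)=C$ (because $2a\in X$ acts on $C$ by \Cref{autos}), we get $[-1](C-a)=[-1](C)+a=C+a=C-a$. Using $\pi\circ t_a=t_{\pi(a)}\circ\pi$ I rewrite $t_a^*L=\pi^*\bigl(t_{\pi(a)}^*M\bigr)$ and $C-a=\pi^{-1}\bigl(H-\pi(a)\bigr)$. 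Now $2\pi(a)=\pi(2a)=0$, so $\pi(a)\in(A/X)[2]$ and $t_{\pi(a)}^*M$ is a symmetric bundle obtained from $M$ by translating by a $2$-torsion point, i.e.\ one of the even or odd theta characteristics of $H$. By \Cref{dimcount} (again $d=1$) the unique section $s'$ cutting out $H-\pi(a)$ is even or odd exactly according to the parity of $t_{\pi(a)}^*M$, and $[C-a]=\pi^*s'$; note that taking $a=0$ recovers (a).

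The step I expect to require the most care is the claim that $\pi^*$ preserves eigenvalues. For a symmetric line bundle $N$ let $\psi_N\colon[-1]^*N\simeq N$ be normalized to be the identity on the fiber over $0$. Because $\pi\circ[-1]=[-1]\circ\pi$, the pullback $\pi^*\psi_N$ is an isomorphism $[-1]^*(\pi^*N)\simeq\pi^*N$, and since $\pi(0)=0$ it is again the identity on the fiber over $0\in A$; by uniqueness of the normalized isomorphism it equals $\psi_{\pi^*N}$. Consequently $\pi^*$ intertwines the two normalized $[-1]^*$-actions, so it sends an eigenvector of eigenvalue $\pm1$ to one of the same eigenvalue. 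Applying this to $N=M$ and to $N=t_{\pi(a)}^*M$ completes (a) and (b) respectively. The geometric content --- pull back the theta divisor and read off its parity from \Cref{dimcount} --- is otherwise routine; the bookkeeping is entirely in keeping the normalization of $[-1]^*N\simeq N$ consistent under pullback.
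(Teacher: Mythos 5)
Your proof is correct and follows essentially the same route as the paper: both arguments pull back the (translated) theta divisor of the principal polarization $M$ on $A/X$ via $\pi^*$ and use the compatibility of $\pi^*$ with the $[-1]^*$-actions to transfer the parity of the section. The paper leaves the equivariance and the $d=1$ dimension count implicit, whereas you spell out the normalization of $[-1]^*N\simeq N$ at the origin and cite \Cref{dimcount}; these are welcome details but not a different argument.
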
  

\begin{proof}
(a) Let $C$ be a curve produced using \Cref{cons} with the subgroup   $X\leq K(L)$. Let $\pi: A\to A/X$ be the quotient isogeny. Then $M:=\pi_*L$
  principally polarizes $A/X$ and let $H\subset A/X$ be the genus $2$ curve so that $M\simeq \O(H)$.

Both $L$ and $M$ both have characteristic $0$ with respect to compatible decompositions.
 In particular, $M$ is even. Since $\pi$ commmutes with $[-1]$, and the following pullback maps $[H]$ to $[C]$, $[C]$ must also have an eigenvalue of $1$ with respect to $[-1]^*$:
\begin{equation}\label{global}
\pi^*: H^0(J_H,M)\to H^0(A,L)
\end{equation}

(b) Let $a\in A$ be such that $2a\in X$. Then
$t_{\pi(a)}^*M$ is symmetric and since $X\leq K(L)$, by
\Cref{translate}(a), $t_a^*L$ is symmetric.
The following pullback maps $[H-\pi(a)]$ to $[C-a]$ and is compatible with $[-1]^*$:
\begin{equation}
\pi^*: H^0(A/X,t_{\pi(a)}^*M)\to H^0(A,t_a^*L).
\end{equation}
Thus, the eigenvalue of $[C-a]$ under the action of $[-1]^*$ 
is $1$ if
$t_{\pi(a)}^*M$ is even and $-1$ if it is odd.
\end{proof}

Let $C$ be any curve in \Cref{cons}.
Combining \Cref{dimcount} and \Cref{lem.fix}, we may find $\#\Fix_C([-1]\circ t_x)$ by producing $y\in A$ so that $2y=x$ and determining whether $t_{-y}^*L$ has an sts and if so, determining the parity of $t_{-y}^*L$ and the eigenvalue of $[C-y]\in H^0(A,L)$ under the action of $[-1]^*$. We may determine whether $t_{-y}^*L$ has an sts using the criteria in \Cref{translate}(b) and by \Cref{eigen}, we may determine the eigenvalue of $[C-y]$ given the parity of $t_{\pi(a)}^*M$.
We use this line of reasoning to distinguish several cases in the following propositions.

\begin{prop}\label{odd}
Let $d$ be odd and $C$ be a curve as in \Cref{cons}. 
For any $x\in X$, the action of $[-1]\circ t_x$ on $C$ has $6$ fixed points.
\end{prop}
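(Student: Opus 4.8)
The plan is to reduce the count of fixed points of $[-1]\circ t_x$ to the count of fixed points of the ordinary involution $[-1]$ on a suitable translate of $C$, and then read off the answer from \Cref{dimcount}. The essential observation is that, because $d$ is odd and $X$ has order $d$, multiplication by $2$ is an automorphism of $X$; hence for the given $x\in X$ there is a (unique) $y\in X$ with $2y=x$. Securing this $y$ \emph{inside} $X$ is what makes the odd case uniform, and I expect it to be the crux of the argument.

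First I would invoke \Cref{lem.fix} with $a=0$ and this $y$. Since $[-1]\circ t_x$ is an involution of $C$ and $2y=x$, the lemma produces a bijection
\[
\Fix_C([-1]\circ t_x)\ \leftrightarrow\ \Fix_{C+y}([-1]),
\]
so it suffices to count the fixed points of $[-1]$ on the translated curve $C+y$.

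Next I would identify $C+y$. Because $y\in X=\ker\pi$ and $C=\pi^{-1}(H)$, translation by $y$ preserves $C$, so in fact $C+y=C$; equivalently, $-y\in K(L)$ forces $t_{-y}^*L\simeq L$, so $C+y$ again lies in $|L|$. Thus the problem becomes counting $\Fix_C([-1])$ for $C\in|L|$. By \Cref{eigen}(a) the class $[C]$ lies in the $+1$ eigenspace $H^0(A,L)^+$, so by the identification of base loci recorded before \Cref{dimcount} we have $\Fix_C([-1])=A[2]^-$.

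Finally I would apply \Cref{dimcount}. The line bundle $L$ of \Cref{cons} has characteristic $0$, hence is even and admits an sts, and $d$ is odd; the corresponding row of \Cref{dimcount} gives $\#A[2]^-=6$. Combining with the bijection above yields $\#\Fix_C([-1]\circ t_x)=6$. The only genuine subtlety is the opening reduction: everything downstream is forced once $y$ is taken in $X$, and that choice is available precisely because $d$ is odd. For even $d$ the map $2\colon X\to X$ need not be surjective and $y$ may be forced outside $K(L)$, so the translate $C+y$ need not return to $|L|$ and one must instead track the parity and sts-type of $t_{-y}^*L$ via \Cref{translate}(b) and \Cref{eigen}(b) --- which is exactly why the even case splits into several subcases later in the paper.
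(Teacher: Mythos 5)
Your proposal is correct and follows essentially the same route as the paper: choose $y\in X$ with $2y=x$ (possible since $d$ is odd), apply \Cref{lem.fix} to reduce to $\Fix_{C+y}([-1])$, observe that $y\in K(L)$ keeps the translate in the even linear system $|L|$, and read off $\#A[2]^-=6$ from \Cref{dimcount}. The paper's proof is just a terser version of the same argument.
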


\begin{proof}
Since the order of $x$ is odd, we may choose $y\in \langle x \rangle$ of the same order as $x$ so that $2y=x$. 
Then, $y\in X \leq K(L)$, $t_{-y}^*L\simeq L$ and $\pi(-y)=0$, so $t_{\pi(y)}^*M\simeq M$.
Both $L$ and $M$ are even, so
\[\#\Fix_{C+y}[-1]=\#\Fix_C([-1]\circ t_x)=6.\hfill\qedhere\]\end{proof}

\begin{prop}\label{even}
Let $d$ be even and $C$ be a curve as in \Cref{cons}.
For any $x\in X$, one of the following mutually exclusive conditions applies to $\Fix([-1]\circ t_x)$.

Let $n$ be the order $x\in X$ and $2^m$ the highest power of $2$ dividing $d$.
\begin{enumerate}[(a)]
\item Suppose that for all $y\in A$ such that $2y=x$, $y\not\in K(L)$. Then, $\#\Fix([-1]\circ t_x)=8$.  This case applies, for instance, if $2^m|n$.
\end{enumerate}  

Now, suppose there exists $y\in A$ such that $2y=x$ and $y\in K(L)$.

\begin{enumerate}[(a)]
\item[(b)] If $t_{\pi(-y)}^*M$ is even, then $\#\Fix([-1]\circ t_x)=4$.
This case applies, for instance, if $y\in X$ (hence $t_{\pi(y)}^*M=M$), which must occur if $n$ is odd or if $n$ is even and $x$ is contained in a cyclic subgroup of $X$ with order $2n$.
\item[(c)] If $t_{\pi(-y)}^*M$ is odd, then $\#\Fix([-1]\circ t_x)=12$.
\end{enumerate}
\end{prop}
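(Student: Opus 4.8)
The plan is to reduce the count $\#\Fix_C([-1]\circ t_x)$ to a count of fixed points of $[-1]$ on a translated curve and then read the answer off from \Cref{dimcount}. Concretely, for any $y\in A$ with $2y=x$, \Cref{lem.fix} (applied with $a=0$) gives that $[-1]$ is an involution of the smooth curve $C+y\in|t_{-y}^*L|$ and that $\#\Fix_C([-1]\circ t_x)=\#\Fix_{C+y}[-1]$; crucially the right-hand side is independent of the choice of $y$. Since $C+y$ is $[-1]$-invariant, its defining section is semi-invariant, so $[C+y]$ lies in $H^0(A,t_{-y}^*L)^{+}$ or $H^0(A,t_{-y}^*L)^{-}$, and \Cref{dimcount} then computes $\#\Fix_{C+y}[-1]=\#A[2]^{\mp}$ from three inputs: whether $t_{-y}^*L$ has an sts, its parity, and the eigenvalue of $[C+y]$. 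The organizing dichotomy is whether one can take $y\in K(L)$, and I will show this is precisely the sts question.

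For the sts input I would use \Cref{translate}(b): since $L$ has an sts, $t_{-y}^*L$ has an sts iff $\varphi_L(-y)=\varphi_L(z)$ for some $z\in A[2]$, i.e.\ iff $y\in K(L)+A[2]$. Because the solutions of $2y=x$ form a single coset of $A[2]$, this condition is independent of the chosen $y$ and is equivalent to the existence of a solution lying in $K(L)$. This yields case (a): if no solution $y$ lies in $K(L)$, then $t_{-y}^*L$ has no sts, so by \Cref{dimcount} both base loci satisfy $\#A[2]^{\pm}=8$ and $\#\Fix([-1]\circ t_x)=8$ regardless of the eigenvalue. To see that $2^m\mid n$ forces this case, I would translate ``there is a solution in $K(L)$'' into $x\in 2K(L)$, i.e.\ both coordinates of $x$ are even in $K(L)\cong(\Z/d\Z)^2$; a short $2$-adic valuation computation shows that even coordinates force $v_2(\mathrm{ord}(x))<m$, so $2^m\mid n$ precludes $x\in 2K(L)$ and hence any solution in $K(L)$.

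For cases (b) and (c) I would fix a solution $y\in K(L)$ (which exists exactly when $x\in 2K(L)$, complementary to case (a)), so that $t_{-y}^*L\simeq L$ is even with an sts; then \Cref{dimcount} gives $\#A[2]^-=4$ and $\#A[2]^+=12$, and the eigenvalue of $[C+y]$ decides between them. Applying \Cref{eigen}(b) with $a=-y$ (legitimate since $2a=-x\in X$), the eigenvalue of $[C+y]$ is $+1$ when $t_{\pi(-y)}^*M$ is even and $-1$ when it is odd, giving $\#\Fix=\#A[2]^-=4$ in case (b) and $\#\Fix=\#A[2]^+=12$ in case (c). The ``for instance'' claims then reduce to producing $y\in X$ with $2y=x$, since $y\in X$ forces $\pi(y)=0$ and hence $t_{\pi(-y)}^*M=M$, which is even: when $n$ is odd one takes $y=\tfrac{n+1}{2}x$, and when $x$ lies in a cyclic subgroup of $X$ of order $2n$ one takes the appropriate half of a generator.

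The main obstacle I anticipate is the well-definedness of cases (b)/(c). Distinct solutions $y,y'\in K(L)$ with $2y=2y'=x$ differ by a point of $A[2]\cap K(L)$, and $\pi(-y),\pi(-y')$ can then differ by a nonzero $2$-torsion point of $A/X$, so a priori the parities of $t_{\pi(-y)}^*M$ and $t_{\pi(-y')}^*M$ need not agree. This is resolved by the $y$-independence of $\#\Fix_{C+y}[-1]$ coming from \Cref{lem.fix}: any two admissible choices must select the same value from $\{4,12\}$, so the parity in question is in fact unambiguous, exactly one of (b),(c) holds, and the three cases are mutually exclusive. The remaining work is the elementary but careful bookkeeping translating the hypotheses on the order of $x$ into the divisibility and coset statements used above.
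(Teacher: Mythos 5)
Your proof is correct and follows essentially the same route as the paper: reduce to $\#\Fix_{C+y}[-1]$ via \Cref{lem.fix}, decide the sts question with \Cref{translate}(b) (equivalently, whether some solution of $2y=x$ lies in $K(L)$), determine the eigenvalue with \Cref{eigen}(b), and read off the count from \Cref{dimcount}. Your explicit verification that the parity of $t_{\pi(-y)}^*M$, and hence the trichotomy in (b)/(c), is independent of the choice of $y\in K(L)$ is a worthwhile point that the paper leaves implicit.
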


\begin{proof}
(a) Let $y\in A$ such that $2y=x$. The condition for part (a) is equivalent to the statement that for any $z\in A[2]$, $z-y\not\in K(L)$. 
By \Cref{translate}(b), $t_{-y}^*L$ does not have a symmetric theta structure, hence $\#\Fix([-1]\circ t_x)=8$.

This condition applies if $2^m|n$:
For any $y\in A$ such that $2y=x$, $y$ has order $2n$ and therefore cannot be contained in $K(L)$.

(b) Since $y\in K(L)$, $t_{-y}^*L\simeq L$ has an sts and is even, and the result follows.

If $n$ is odd, then there is some $y\in \langle x \rangle$ such that $2y=x$, so $y\in X$. If $n$ is even but $x$ is contained in a cyclic subgroup of $X$ of order $2n$ then we may take $y$ to be its generator.

(c) Under the above conditions, $t_{-y}^*L\simeq L$ has an sts and is even, so the result follows.
\end{proof}

\subsubsection{Even and odd}\label{parity}
To apply \Cref{even},
we examine how to determine the parity of $t_{\pi(-y)}^*M$ for $y\in K(L)$ such that $2y\in X$.

In \Cref{cons}, we have fixed a decomposition of $A/X$ for the N\'eron--Severi class of $M$. The induced decomposition on the $2$-torsion subgroup is a sum of maximal isotropic subgroups of $(A/X)[2]$ with respect to the Weil pairing:
\[
(A/X)[2]\cong Z_1\oplus Z_2.
\]  
This decomposition allows us to determine the parity of $t_z^*M$ for any $z\in (A/X)[2]$, which is even or odd if the quadratic form $q_M(z)$ is $1$ or $-1$, respectively. We may choose dual symplectic bases $\{u_1,u_2\}$ and $\{v_1,v_2\}$ of $Z_1$ and $Z_2$ as $\F_2$-vector spaces so that $\langle u_i,v_j \rangle=(-1)^{\delta_{ij}}$. We may write $z=u+v$ for $u\in Z_1$, $v\in Z_2$, and then $q_M(z)=\langle u,v \rangle$.

However, if $z=\pi(-y)$ for some $y\in K(L)$ (as in the cases we treat in~\Cref{even}), then it is contained 
in a smaller subspace of $(A/X)[2]$. The decomposition of $A$ for $l$ induces a decomposition of $K(L)$ as follows:
\[
K(L)=K_1\oplus K_2=\langle k_1 \rangle \oplus \langle k_2 \rangle.
\]
The image of $K(L)$ in $A/X$ is
$\pi(K(L))=\langle \pi(k_1) \rangle\oplus \langle \pi(k_2)\rangle$, which, by the compatibility assumptions of \Cref{cons}, respects the decomposition of $A/X$ with respect to $M$.
Since $\pi(K(L))$ is generated by two cyclic groups, its intersection with $(A/X)[2]$ (if nontrivial) is contained in two cyclic groups:
\[
\pi(K(L))\cap (A/X)[2] \subseteq \langle w_1 \rangle \oplus \langle w_2 \rangle
\]  
for some $w_i$ such that $\Z/2\Z\cong \langle w_i \rangle\leq Z_i$ and
$\langle w_1,w_2 \rangle=-1$.
Then we have the following result:
\begin{align*}
&\text{If $\pi(-y)$ is $0$, $w_1$, or $w_2$, then $t^*_{\pi(-y)}M$ is even.}
  \\
&\text{If $\pi(-y)$ is $w_1+w_2$, then $t^*_{\pi(-y)}M$ is odd.}  
\end{align*}

\section{Counting fixed points of $[-1]\circ t_x$}\label{sec_invol}

In this section, let $C$ be as in \Cref{cons} and $d$ be even. We give further results on $\#\Fix_C([-1]\circ t_x)$ for $x\in X$, particularly investigating the unusual case where the number of fixed points is $12$,  and then apply these results to review the number of smooth hyperelliptic curves in $L$.

\subsection{When can $\#\Fix_C([-1]\circ t_x)=12$?}
In the first result in this section, we see a situation where such an involution cannot occur, and in our second result, we characterize such involutions. Let $2^m$ be the highest power of $2$ dividing $d$. Let $X_2$ be the Sylow $2$-subgroup of $X$. We refer to the projection of any $x\in X$ to $X_2$ as the $2$-primary part of $x$.

\begin{prop}\label{cyclic.fix}
If $X_2$ is cyclic, then, for any $x\in X$:
\begin{enumerate}[(a)]
\item $\#\Fix_C([-1]\circ t_x)=8$ if the $2$-primary part of $x$ generates $X_2$.
\item $\#\Fix_C([-1]\circ t_x)=4$ if the $2$-primary part of $x$ does not generate $X_2$.
\end{enumerate}  
\end{prop}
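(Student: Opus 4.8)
The plan is to deduce both parts directly from the trichotomy of \Cref{even} by translating the two hypotheses on the $2$-primary part of $x$ into group-theoretic divisibility statements. Write $d=2^m d'$ with $d'$ odd, so that the Sylow $2$-subgroup of $K(L)\cong(\Z/d\Z)^2$ is $(\Z/2^m\Z)^2$ and $|X_2|=2^m$; by hypothesis $X_2\cong\Z/2^m\Z$. Decompose $X=X_2\oplus X_{\mathrm{odd}}$ into its $2$-part and its odd part $X_{\mathrm{odd}}$, and correspondingly write $x=x_2+x_{\mathrm{odd}}$, where $x_2$ is the $2$-primary part named in the statement. Let $n=\mathrm{ord}(x)$, so $n=\mathrm{ord}(x_2)\cdot\mathrm{ord}(x_{\mathrm{odd}})$ with the two factors coprime.

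For part (a), I would assume $x_2$ generates $X_2$. Then $\mathrm{ord}(x_2)=2^m$, hence $2^m\mid n$, which is exactly the sufficient condition recorded in \Cref{even}(a); that gives $\#\Fix_C([-1]\circ t_x)=8$. (The underlying reason, already established in \Cref{even}(a), is that any $y\in A$ with $2y=x$ then has order $2n$, whose $2$-part has order $2^{m+1}$, while $K(L)\cong(\Z/d\Z)^2$ contains no element of order divisible by $2^{m+1}$; so no such $y$ lies in $K(L)$.)

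For part (b), I would assume $x_2$ does not generate the cyclic group $X_2\cong\Z/2^m\Z$. The key elementary observation is that the non-generators of a cyclic $2$-group are precisely its image under multiplication by $2$, so $x_2\in 2X_2$; meanwhile multiplication by $2$ is an automorphism of $X_{\mathrm{odd}}$ since $|X_{\mathrm{odd}}|$ is odd, so certainly $x_{\mathrm{odd}}\in 2X_{\mathrm{odd}}$. Hence $x\in 2X$, i.e.\ there exists $y\in X$ with $2y=x$. Since $y\in X=\ker\pi$, we get $\pi(-y)=0$, so $t^*_{\pi(-y)}M=M$, which is even (being of characteristic $0$). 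Thus we land in case \Cref{even}(b), giving $\#\Fix_C([-1]\circ t_x)=4$.

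Since every $x\in X$ falls into exactly one of these two cases, the argument simultaneously shows that the value $12$ of \Cref{even}(c) never occurs when $X_2$ is cyclic, which is the ``only if'' direction needed for \Cref{thm.even}. The proof is in essence a bookkeeping reduction to \Cref{even}, so I do not expect a serious obstacle; the only real content, and the step I would take most care over, is the matching of ``generates / does not generate $X_2$'' with the dichotomy ``$2^m\mid n$ / $x\in 2X$'', together with the observation that exhibiting a single $y\in X$ with $2y=x$ is enough to place us in case (b). Because the fixed-point count $\#\Fix_C([-1]\circ t_x)$ is intrinsic, this one good choice of $y$ forces the value $4$ and rules out the competing case (c), consistent with the mutual exclusivity asserted in \Cref{even}.
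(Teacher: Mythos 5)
Your proof is correct and follows essentially the same route as the paper: part (a) is exactly the ``$2^m\mid n$'' instance of \Cref{even}(a), and part (b) produces a $y\in X$ with $2y=x$ so that $t^*_{\pi(-y)}M=M$ is even and \Cref{even}(b) applies. Your write-up is if anything slightly more careful than the paper's terse proof, in spelling out why such a $y$ exists in $X$ (non-generators of a cyclic $2$-group lie in $2X_2$, and the odd part is $2$-divisible) and why in case (a) \emph{every} $y$ over $x$ lies outside $K(L)$.
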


\begin{proof}
  The order of $X_2$ is $2^m$, which is also the highest order of any elements in the $2$-primary part of $K(L)$.

(a) If the $2$-primary part of $x$ generates $X_2$, we may choose $y$ such that $2y=x$ where $y\not\in K(L)$ (we may choose the non $2$-primary part of $y$ to be in $X$). Thus \Cref{even}(a) applies.

(b) Choose $y\in X$ so that $2y=x$ and apply \Cref{even}(b).  
\end{proof}

\begin{prop}\label{twelve}
  If $X_2$ is not cyclic, then there is an $x\in X$ so that $\#\Fix_C([-1]\circ t_x)=12$.

Moreover, for any $x\in X$, $\#\Fix_C([-1]\circ t_x)$ is either $4$ or $12$.
\end{prop}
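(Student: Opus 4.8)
The plan is to read off both assertions from \Cref{even} together with the parity dictionary at the end of \cref{parity}, once the group theory of a non-cyclic $X_2\le K(L)_2\cong(\Z/2^m\Z)^2$ is pinned down. Throughout I will use that $X$ is maximal isotropic for $e^L$ (by \Cref{lem.pol} and the remark following it, since $X$ has order $d$ and isotropic subgroups have order at most $d$), so that the nondegenerate pairing identifies $\pi(K(L))\cong K(L)/X\cong X$. In particular the $2$-torsion subgroup $\pi(K(L))\cap(A/X)[2]=\pi(K(L))[2]$ is isomorphic to $X[2]=X_2[2]$, and its $\F_2$-dimension equals the $2$-rank of $X_2$.

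For the ``moreover'' clause I would first show that $X_2$ non-cyclic forces $X\subseteq 2K(L)$, which rules out \Cref{even}(a), the only source of $8$ fixed points. The prime-to-$2$ part of $X$ lies in $2K(L)$ automatically, as multiplication by $2$ is invertible there, so it suffices to prove $X_2\subseteq 2K(L)_2$. The key observation is that in $(\Z/2^m\Z)^2$ the elements of order $2^m$ are exactly those having an odd coordinate, whereas $2K(L)_2$ consists precisely of the elements with both coordinates even; hence $X_2\subseteq 2K(L)_2$ is equivalent to $\exp(X_2)<2^m$. Since $|X_2|=2^m$, an element of order $2^m$ would generate $X_2$ and make it cyclic, so non-cyclicity gives $\exp(X_2)\le 2^{m-1}$ and therefore $X_2\subseteq 2K(L)_2$. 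Consequently every $x\in X$ admits a $y\in K(L)$ with $2y=x$, so \Cref{even}(a) never applies and \Cref{even}(b),(c) leave only the values $4$ and $12$.

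For the existence of an $x$ with twelve fixed points I would exploit that, when $X_2$ is non-cyclic, $X[2]\cong(\Z/2\Z)^2$, so $\pi(K(L))\cap(A/X)[2]$ has order $4$. Comparing this with the containment $\pi(K(L))\cap(A/X)[2]\subseteq\langle w_1\rangle\oplus\langle w_2\rangle$ from \cref{parity} forces both $w_1,w_2$ to have order $2$ and the containment to be an equality; in particular $w_1+w_2\in\pi(K(L))$. I then choose $y\in K(L)$ with $\pi(y)=w_1+w_2$ and set $x=2y$. Because $\pi(y)$ is $2$-torsion we have $\pi(2y)=0$, so $x=2y\in X$, while $\pi(-y)=\pi(y)=w_1+w_2$; the parity rule of \cref{parity} then gives that $t_{\pi(-y)}^*M$ is odd, and \Cref{even}(c) yields $\#\Fix_C([-1]\circ t_x)=12$.

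The only genuine obstacle is the group-theoretic translation in the two preceding paragraphs, namely converting ``$X_2$ is non-cyclic'' into the two concrete statements $X_2\subseteq 2K(L)_2$ and $\pi(K(L))\cap(A/X)[2]\cong(\Z/2\Z)^2$; once these are in hand the fixed-point counts follow immediately from \Cref{even}. I would take care to justify the identification $\pi(K(L))\cong X$ through maximal isotropy of $X$ (so that the $2$-rank of $\pi(K(L))$ matches that of $X_2$), and to note that the exponent argument needs only $|X_2|=2^m$ and non-cyclicity, not isotropy of $X_2$.
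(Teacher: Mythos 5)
Your proposal is correct and follows essentially the same route as the paper: both arguments reduce to \Cref{even} together with the parity dictionary of \cref{parity}, ruling out $8$ fixed points because every element of a non-cyclic $X_2$ is divisible by $2$ inside $K(L)$, and producing $12$ by exhibiting $y\in K(L)$ with $\pi(-y)=w_1+w_2$. The only (harmless) difference is that you obtain $\pi(K(L))\cap(A/X)[2]\cong(\Z/2\Z)^2$ via the duality $K(L)/X\cong X$ coming from maximal isotropy, whereas the paper reads the same facts off the compatible decomposition $X_2\cong\Z/2^{m_1}\Z\oplus\Z/2^{m_2}\Z$ fixed in \Cref{cons}.
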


\begin{proof}
We may write the decomposition of the $2$-primary part of $K(L)$ for~$l$ as follows:
  \[
K(L)_2:=K(L)\cap A[2^m]\cong  K_1 \oplus K_2 =
    \langle k_1 \rangle \oplus \langle k_2\rangle.
  \]
Since we have chosen compatible decompositions in \Cref{cons},
we may also write $X_2$  as a sum with respect to this decomposition for some $0<m_1\leq m_2$ and $m_1+m_2=m$:
\[X_2\simeq\Z/2^{m_1}\Z \oplus \Z/2^{m_2}\Z .\]
Since $m_1,m_2<m$, for any $x\in X$, there exists $y\in K(L)$ so that $2y=x$,
and thus \Cref{even}(a) cannot apply and $\#\Fix_C([-1]\circ t_x)$ cannot be $8$, leaving $4$ and $12$ as the possibile numbers of fixed points.

Again, by examining orders of elements, there is
a $y\in K(L)\setminus X$ so that $2y=x\in X$ and
$\pi(y)$ is not contained in either group
$\langle \pi(k_i) \rangle$.
As shown in \cref{parity}, $t_{\pi(-y)}^*$ must be odd. 
\end{proof}  
  
\subsection{Smooth hyperelliptic curves in $(1,d)$-polarizations}

In this section, we review the number of smooth hyperelliptic curves in a fixed symmetric linear system of type $(1,d)$ on a general abelian surface.
These values are nonzero for $1\leq d\leq 4$ and in these cases are $1$, $6$, $9$ and $4$, respectively (see \cite[Table~1]{Bryan}).

These curves
are symmetric and have hyperelliptic involution $[-1]$ or $[-1]\circ t_x$ for some $x\in K(L)$ or are translations of such a curve:  by \Cref{lem.fix}, for any curve $C\subset A$ with involution $[-1]\circ t_x$ and $b\in K(L)$, $\#\Fix_C([-1]\circ t_x)=\#\Fix_{C+b}([-1]\circ t_{x-2b})$.

Smooth curves in $|L|$ have genus $d+1$ and thus a hyperelliptic involution on such a curve must have $2d+4$ fixed points.
For $d=2$ or $4$, we produce these curves using \Cref{cons}. In those cases, we may then produce $|K(L)/X|$ distinct translations of each curve.

\subsubsection{$d$ odd}

In the principally polarized case ($d=1$), the abelian surface is the Jacobian of a genus $2$ curve. 
Any symmetric polarizing line bundle on such a surface has its linear system given by (a translation of) this curve with hyperellipitic involution $[-1]$, making the total number of hyperelliptic curves in a fixed linear system
$|K(L)|=1$.

When $d=3$,  a hyperelliptic involution must have $10$ fixed points. In \Cref{dimcount},
we see that the systems $H^0(A,L)^{\pm}$
contain a single smooth curve where $[-1]$ has $10$ fixed points.
By \Cref{odd}, this curve cannot occur 
 as a cover of a genus $2$ curve using \Cref{cons}.
Bor\'{o}wka and Sankaran \cite{BS} exhibit
it in terms of theta functions and, show that,
up to translation, it is the unique hyperelliptic curve in a general
$(1,3)$-polarized
abelian surface.
 
Given this curve $C$, 
for each $b\in K(L)$, the translation $C+b$ is  a distinct hyperelliptic curve in $|L|$. The total number of hyperelliptic curves in $|L|$
produced in this way is:
\[|K(L)|=9.\]

\subsubsection{$(1,2)$-polarizations}\label{hyper.two} In this case, a hyperelliptic involution must have $8$ fixed points.
Any curve $C$ as in \Cref{cons} is formed using an order two cyclic group
$\langle x \rangle=X\leq K(L)$.
By \Cref{cyclic.fix}, $\#\Fix_C[-1]=4$ and $\#\Fix_C([-1]\circ t_x)=8$, making $[-1]\circ t_x$ a hyperelliptic involution.

A detailed treatment of the $(1,2)$-polarized case where $L$ has a symmetric theta structure is given by Barth in  \cite{Barth}. Our argument here recovers his result that, for instance,
a hyperelliptic involution of the curves in $|L|$ must be distinct from $[-1]$.

The total number of hyperelliptic curves
in $|L|$ that
we can produce from this construction, as well as translating, is thus:
\[
\sum_{\Z/2\Z\simeq X\leq K(L)} |K(L)/X|=3\cdot 2=6.
\]

\subsubsection{$(1,4)$-polarizations}\label{14invol}
In this case a hyperelliptic involution must have $12$ fixed points.
Curves can be obtained using \Cref{cons} using a choice of order $4$ subgroup
$X\leq K(L)\simeq (\Z/4\Z)^2$. 

By \Cref{cyclic.fix}, if $X\leq K(L)$ is cyclic, then
$\#\Fix([-1]\circ t_x)$ is $4$ or $8$, so 
\Cref{cons} will not produce a hyperelliptic involution.

Now, let $X\leq K(L)$ be the Klein group. We may write $K(L)$ as a sum with respect to the decomposition of $A$ for $[L]$:
\[
\langle k_1 \rangle \oplus  \langle k_2 \rangle \cong K_1 \oplus K_2.
\]  
Using this notation, $X=\langle 2k_1 \rangle \oplus \langle 2k_2 \rangle$.
By \Cref{twelve}, 
\begin{gather*}
\#\Fix_C[-1]=\#\Fix_C([-1]\circ t_{2k_1})=\#\Fix_C([-1]\circ t_{2k_2})=4,\\
\#\Fix_C([-1]\circ t_{2k_1+2k_2})=12,
\end{gather*}
making $[-1]\circ t_{2k_1+2k_2}$ a hyperelliptic involution for $C$.

Since there is only one way to embed the Klein group $V_4$ into $K(L)$,
we have found one hyperelliptic curve using \Cref{cons}.
The total 
number of hyperelliptic curves in $|L|$ that we may then produce by  translating is:
\[
|K(L)/V_4|=4.
\]

\begin{rmk}
This construction coincides with that of Bor\'{o}wka and Ortega \cite{BO},
and so we may think of \Cref{even} as a generalization of their results. However, in \Cref{cons} we have specified additional data by choosing decompositions.

The authors prove their curve is the unique hyperelliptic curve (up to translation) on a general $(1,4)$-polarized abelian surface.
It is constructed as a cover of a genus $2$ curve, $\widetilde{C}\to H$ with respect to the Klein group $\{0,\eta_1,\eta_2,\eta_1+\eta_2\}=G\leq J_H[2]$.
The result \cite[Theorem~4.7]{BO} states that if $\widetilde{C}$ is hyperelliptic then $G$ is non-isotropic with respect to the Weil pairing on $J_H[2]$ and that, conversely, if $G$ is non-isotropic and $\eta_1$ and $\eta_2$ may be written as the difference of two Weierestrass points, then $\widetilde{C}$ is hyperelliptic.

In the notation of this paper $\pi(K(L))$ plays the role of the group $G$ generated by  $\pi(k_1)$, $\pi(k_2)$. It's precisely this non-isotropy condition that allows us to pick an element of $z\in\pi(K(L))$ where $t_{z}^*M$ is odd. 
The Weierstrass points of $z\in H$ are those where $q_M(z)=-1$,
but since we have arranged that  $q_M(\pi(k_1))=q_M(\pi(k_2))=1$, we can see that $\pi(k_1)$ and $\pi(k_2)$ must be differences of Weierstrass points. 
\end{rmk}  

\section{Decomposing Jacobians using subgroups}\label{sec.decomp}

By Poincar\'e reducibility, every abelian variety is isogenous to a product of simple abelian varieties that are unique, up to isogeny.
In this section, we decompose the Jacobians of the curves $C$ produced using \Cref{cons} into, up to isogeny, products of smaller abelian varieties.

In \cref{decomp.general}, 
we give some general consequences of  \cite[Theorem~B]{KaniRosen}
for decomposing $J_C$. In \cref{JC4}, we use our methods to examine several cases where $J_C$ is isogenous to a product of $A$ and elliptic curves, recovering some results of \cite{EkedahlSerre}.
Finally in \cref{cover} we give some examples of curves that cover an elliptic curve.

\subsection{General results on decomposing Jacobians}\label{decomp.general}

Kani and Rosen prov{\-}ed the following theorem by showing that partitions of the automorphism group of $C$ induce idempotent relations in the endomorphism algebra of the Jacobian $J_C$.
The automorphisms of the curves $C$ produced using \Cref{cons} contain the group $\langle -1 \rangle \rtimes X$ (\Cref{autos}),
but not every such group admits a partition.
The only subgroups admitting a partition are
dihedral groups or certain $p$-groups (see \cite[Theorem~3.5.10]{Schmidt}), 
hence
the structure of this group affects the results that may be gained from applying this theorem. 

\begin{thm}[{{\cite[Theorem~B]{KaniRosen}}}]\label{KR}
  Let $C$ be a (smooth, projective, geometrically connected) curve and $G$ be a finite subgroup of $\Aut(C)$ such that $G=H_1\cup\cdots\cup H_t$ where the subgroups $H_i\leq G$ satisfy $H_i\cap H_j=1$ if $i\neq j$. Then we have the isogeny relation
  \begin{equation}
J_C^{t-1}\times J_{C/G}^g\sim J_{C/H_1}^{h_1}\times\cdots\times J_{C/H_t}^{h_t}
  \end{equation}    
  where $|G|=g$ and $|H_i|=h_i$.
\end{thm}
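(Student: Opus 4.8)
The plan is to cite Kani and Rosen for the statement and, to justify it, translate the combinatorial partition of $G$ into an idempotent relation in the endomorphism algebra $\operatorname{End}^0(J_C):=\operatorname{End}(J_C)\otimes\Q$ and then convert that algebraic relation into the asserted isogeny. First I would use the action of $\Aut(C)$ on $J_C$ to obtain a $\Q$-algebra homomorphism $\Q[\Aut(C)]\to\operatorname{End}^0(J_C)$, and for each subgroup $H\leq G$ set $\varepsilon_H:=\frac{1}{|H|}\sum_{\sigma\in H}\sigma$. Each $\varepsilon_H$ is an idempotent, and it is symmetric for the Rosati involution because every $\sigma$, being induced by an automorphism of $C$, preserves the principal (theta) polarization of $J_C$. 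The \emph{key geometric input} is the identity $\varepsilon_H J_C\sim J_{C/H}$: the quotient map $\pi_H:C\to C/H$ induces a pullback $\pi_H^*:J_{C/H}\to J_C$ whose image is, up to isogeny, exactly the $H$-invariant part $\varepsilon_H J_C$ (its composition with the norm map is multiplication by $|H|$). In particular $\varepsilon_{\{1\}}=\id$ has image $J_C$ and $\varepsilon_G$ has image isogenous to $J_{C/G}$.

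Next I would record the idempotent relation forced by the partition. Because $G=H_1\cup\cdots\cup H_t$ with $H_i\cap H_j=\{1\}$ for $i\neq j$, every non-identity element of $G$ lies in exactly one $H_i$ while the identity lies in all $t$ of them. Writing $h_i\varepsilon_{H_i}=\sum_{\sigma\in H_i}\sigma$ and counting group-algebra elements yields
\[
\sum_{i=1}^{t} h_i\,\varepsilon_{H_i}=(t-1)\,\id+g\,\varepsilon_G
\]
in $\operatorname{End}^0(J_C)$, where $g=|G|$ and $h_i=|H_i|$ (the identity is counted $t$ times on the left, i.e. $t-1$ times beyond its single appearance in $g\,\varepsilon_G$, and the non-identity elements match).

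Finally I would convert this identity into the isogeny. By Poincar\'e reducibility $\operatorname{End}^0(J_C)$ is semisimple, say $\operatorname{End}^0(J_C)\cong\prod_k M_{n_k}(D_k)$, with the simple factors corresponding to the distinct simple abelian varieties $B_k$ appearing in $J_C$. A symmetric idempotent $\varepsilon$ determines in each factor a rank $r_k(\varepsilon)$, and its image $\varepsilon J_C$ is isogenous to $\prod_k B_k^{\,r_k(\varepsilon)}$; crucially, $\varepsilon\mapsto r_k(\varepsilon)$ is $\Q$-linear, being a reduced trace in the $k$-th factor. Applying $r_k$ to the displayed relation matches, factor by factor, the multiplicity of each $B_k$ on the two sides, which is precisely the statement that the corresponding products of abelian varieties are isogenous. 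Combining this with $\varepsilon_{\{1\}}J_C=J_C$, $\varepsilon_G J_C\sim J_{C/G}$, and $\varepsilon_{H_i}J_C\sim J_{C/H_i}$ gives
\[
J_C^{t-1}\times J_{C/G}^{g}\sim\prod_{i=1}^t J_{C/H_i}^{h_i}.
\]
The main obstacle is exactly this last passage from algebra to geometry: one must know that a positive-integer relation among \emph{symmetric} idempotents upgrades to an isogeny with those integers as exponents. This is where the symmetry of the $\varepsilon_H$ (so that images of idempotents behave as abelian subvarieties) and the semisimplicity of $\operatorname{End}^0(J_C)$ (so that the factorwise ranks are additive and detect the isogeny type) are both essential; the remaining steps are formal once the identification $\varepsilon_H J_C\sim J_{C/H}$ is in hand.
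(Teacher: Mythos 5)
Your proposal is correct and follows exactly the route the paper attributes to Kani and Rosen (the paper itself only cites \cite[Theorem~B]{KaniRosen} and summarizes the method): the partition yields the idempotent relation $\sum_i h_i\varepsilon_{H_i}=(t-1)\id+g\varepsilon_G$ in $\operatorname{End}^0(J_C)$, the identification $\varepsilon_H J_C\sim J_{C/H}$ via $\pi_H^*$ and the norm map supplies the geometric input, and the rank/reduced-trace argument in the semisimple endomorphism algebra converts the idempotent relation into the isogeny. The only cosmetic remark is that Rosati-symmetry of the $\varepsilon_H$ is not actually needed for the image of an idempotent to be an abelian subvariety up to isogeny, but including it does no harm.
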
  

We deduce two general results on the decomposition of Jacobians of curves $C$ produced as in \Cref{cons} in the cases where $X\leq K(L)$ is cyclic or it is a product of two cyclic groups of odd prime order.

\begin{thm}\label{cyclic}
Let $C$ be as in \Cref{cons} for
$X=\langle x \rangle\leq K(L)$ a cyclic group (of order $d$). We have the following isogeny relations:
\begin{enumerate}[(a)]
\item When $d$ is odd: $J_C\sim A\times J^2_{C/\langle -1 \rangle}$.
\item When $d$ is even:
  $J_C\sim A\times J_{C/\langle -1 \rangle}\times J_{C/\langle -1 \circ t_x\rangle}$.
\end{enumerate}  
\end{thm}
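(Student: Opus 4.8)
The plan is to apply Theorem~\ref{KR} (Kani--Rosen) to a carefully chosen subgroup $G \leq \Aut(C)$ together with an explicit partition, and then use the fixed-point counts of \Cref{odd} and \Cref{even} (via Riemann--Hurwitz) to identify the genera of the quotient curves appearing on the right-hand side. By \Cref{autos}, $\Aut(C)$ contains $\langle -1 \rangle \rtimes X$, and since $X = \langle x \rangle$ is cyclic, the natural candidate for $G$ is the dihedral group $D = \langle -1 \rangle \rtimes \langle x \rangle$ of order $2d$, which is one of the groups admitting a partition. The recurring theme is that $J_{C/X} = J_H \sim A/X$ is an elliptic-curve-free part, while $J_{C/\langle -1 \rangle}$ and its translated analogues carry the interesting factors.

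\textbf{Case (a), $d$ odd.} First I would take $G = D = \langle -1 \rangle \rtimes \langle x \rangle$ and use the standard partition of a dihedral group of order $2d$ (with $d$ odd) into its unique cyclic subgroup $\langle x \rangle$ of order $d$ together with the $d$ reflection subgroups $\langle [-1] \circ t_{x^i} \rangle$, each of order $2$. This gives $t = d+1$ pieces meeting pairwise trivially. Feeding this into \Cref{KR} yields $J_C^{d} \times J_{C/D}^{g_D} \sim J_{C/X}^{d} \times \prod_i J_{C/\langle -1 \circ t_{x^i}\rangle}^{2}$, where $g_D$ is the genus of $C/D$. The key simplifications are: $C/X \cong H$ so $J_{C/X} = J_H \sim A/X \sim A$ (as $X$ is isogenous to a point in the relevant sense; more precisely $J_H$ is principally polarized and the construction identifies it with $A/X$, which is isogenous to $A$); and, crucially, \Cref{odd} forces every involution $[-1]\circ t_{x^i}$ to have exactly $6$ fixed points, so by Riemann--Hurwitz all the quotients $C/\langle -1 \circ t_{x^i}\rangle$ are isomorphic curves of the same genus, all conjugate to $C/\langle -1\rangle$ under the $X$-action; hence their Jacobians are all isogenous to $J_{C/\langle -1 \rangle}$. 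A further Riemann--Hurwitz computation shows $C/D$ has genus $0$ (so $J_{C/D} = 0$ and that term drops out). After canceling the common factor $J_{C/X}^d \sim A^d$ and absorbing powers, the relation collapses to $J_C \sim A \times J_{C/\langle -1\rangle}^2$.

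\textbf{Case (b), $d$ even.} Here the dihedral group structure changes: when $d$ is even, $D$ has \emph{two} conjugacy classes of reflections, those of the form $[-1]\circ t_{x^i}$ with $i$ even and those with $i$ odd. By \Cref{even}, the fixed-point count now depends on the parity of $i$ through whether one can halve $x^i$ inside $K(L)$: representatives of the two classes are $[-1]$ itself and $[-1]\circ t_x$, and these generically have different numbers of fixed points ($4$ versus $8$ or $12$ depending on the $2$-Sylow structure, per \Cref{cyclic.fix} and \Cref{twelve}), hence give genuinely different quotient curves. I would use the partition of $D$ into $\langle x \rangle$ together with the two families of reflection subgroups; Kani--Rosen then produces $J_{C/\langle -1\rangle}$ and $J_{C/\langle -1\circ t_x\rangle}$ as the two distinct factors, and again $J_{C/X} \sim A$ with $J_{C/D}$ of genus zero, giving $J_C \sim A \times J_{C/\langle -1\rangle} \times J_{C/\langle -1\circ t_x\rangle}$ after cancellation.

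\textbf{The main obstacle} will be the bookkeeping in Kani--Rosen: correctly computing the genus $g_{C/D}$ of the full quotient (to confirm it vanishes), correctly identifying which reflection subgroups yield isogenous versus non-isogenous Jacobian factors, and then carefully canceling the high powers of $J_{C/X} \sim A$ so that the exponents balance to give a clean relation with multiplicity one on $A$. The parity distinction in the even case — that the two reflection classes are genuinely inequivalent and produce the two named quotients $C/\langle -1\rangle$ and $C/\langle -1 \circ t_x\rangle$ — is exactly the subtlety, and it rests entirely on the earlier fixed-point analysis showing these involutions have \emph{different} fixed-point counts, hence quotients of different genus, hence non-cancellable distinct factors. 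Verifying that $J_{C/X}$ is indeed isogenous to $A$ (rather than $A/X$, which is only isogenous to $A$) is the one place I would need to be careful that "isogenous" is the right relation throughout.
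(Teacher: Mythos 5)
Your proposal is correct and follows essentially the same route as the paper: the dihedral group $\langle -1, t_x\rangle$ with its standard partition into $\langle t_x\rangle$ and the $d$ reflection subgroups, Kani--Rosen, $C/G\cong\P^1$, and the grouping of the reflection subgroups into one conjugacy class ($d$ odd) or two ($d$ even). The only quibble is that your appeal to \Cref{odd}/\Cref{even} and Riemann--Hurwitz is superfluous here (equal fixed-point counts alone would give equal genera, not isogenous Jacobians); the conjugacy of the subgroups $\langle -1\circ t_{ix}\rangle$ inside $G$, which you also invoke, is what actually identifies the quotients, exactly as in the paper.
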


\begin{proof}
  $\Aut(C)$ contains the dihedral group $G:=\langle -1, t_x\rangle$ of order $2d$, which admits the following partition:
\[
 G=\langle t_x\rangle\cup \langle -1\rangle \cup
 \langle -1\circ t_x \rangle\cup\cdots\cup \langle-1\circ t_{(d-1)x} \rangle.
\]
Applying \Cref{KR} gives the following isogeny relation:
\[
  J_C^d\times J_{C/G}^{2d}\sim 
  J_{C/\langle t_x\rangle}^d\times
J_{C/\langle -1\rangle}^2\times J_{C/\langle -1\circ t_x \rangle}^2\times\cdots\times
J_{C/\langle-1\circ t_{(d-1)x} \rangle}^2.
\]
We now analyze these quotients further. The quotient $C/G$ is isomorphic to the quotient of the genus two curve $C/X\cong H$ of \Cref{cons} by its hyperelliptic involution, and thus $C/G\cong \P^1$, making $J_{C/G}$ trivial.

Furthermore, quotienting a curve by conjugate subgroups yields isomorphic quotient curves.
When $d$ is odd, the order two subgroups of $G$ are precisely the $2$-Sylow subgroups of $G$ and hence are conjugate, proving (a).

When $d$ is even, the order two subgroups generated by
$-1, -1\circ t_{2x}, \ldots, -1\circ t_{(d-2)x}$ are conjugate to one another, and the the remaining order two subgroups are all conjugate to $\langle -1\circ t_x \rangle$, proving result (b).
\end{proof}

\begin{thm}\label{prime}
Let $p$ be an odd prime and $d=p^2$.
Let $C$ be as in \Cref{cons} for:
\[
(\Z/p\Z)^2\cong X=\langle x_1 \rangle \times \langle x_2 \rangle\leq K(L).
\]  
Then we have the following isogeny relations, where each $C_i$ is a curve of genus $\frac{p-1}{2}$ that is a quotient of $C$ by $[-1]$ and a translation:
\[J_C \sim A \times \prod_{i=1}^{p+1} J_{C_i}^2,\quad
J_{C/\langle -1\rangle} \sim \prod_{i=1}^{p+1} J_{C_i}.\]
\end{thm}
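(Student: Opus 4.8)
The plan is to work throughout in the isogeny category of abelian varieties, which is semisimple by Poincar\'e reducibility, so that the cancellation laws $U\times V^n\sim U\times W^n\Rightarrow V\sim W$ and $V^n\sim W^n\Rightarrow V\sim W$ are available. I will feed partitions of the generalized dihedral group $G:=\langle -1\rangle\rtimes X\leq\Aut(C)$ (see \Cref{autos}), which has order $2p^2$, into \Cref{KR}. The whole argument hinges on one observation: since $p$ is odd, $2$ is invertible on $X\cong(\Z/p\Z)^2$, and conjugation by $t_z$ sends $[-1]\circ t_x$ to $[-1]\circ t_{x-2z}$; hence all $p^2$ involutions of $G$ are conjugate, and every quotient $C/\langle -1\circ t_x\rangle$ is isomorphic to $\widetilde{C}:=C/\langle -1\rangle$. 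This collapses the $p^2$ involution-quotients appearing in any application of \Cref{KR} into the single curve $\widetilde{C}$.

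First I will record the structural quotients of $G$. The normal subgroup $X$ of order $p^2$ has $C/X\cong H$, so $J_{C/X}=J_H\sim A$, and $C/G=(C/X)/\langle -1\rangle=H/\langle -1\rangle\cong\P^1$, so $J_{C/G}$ is trivial. Applying \Cref{KR} to the partition $G=X\cup\bigcup_{x\in X}\langle -1\circ t_x\rangle$ (which has $t=p^2+1$ parts) and using the conjugacy above gives
\[
J_C^{p^2}\sim J_H^{p^2}\times J_{\widetilde C}^{2p^2}\sim\bigl(A\times J_{\widetilde C}^2\bigr)^{p^2},
\]
and cancelling the $p^2$-th power yields the clean relation $J_C\sim A\times J_{\widetilde C}^2$. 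It therefore remains only to prove $J_{\widetilde C}\sim\prod_{i=1}^{p+1}J_{C_i}$, since substituting this into the previous line gives the first asserted isogeny.

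For the second relation I will use the $p+1$ lines $\ell_1,\dots,\ell_{p+1}\leq X$ (the cyclic subgroups of order $p$). Each determines a dihedral subgroup $D_{\ell_i}=\langle -1\rangle\rtimes\ell_i$ of order $2p$, and $C_i:=C/D_{\ell_i}$ is the quotient of $C$ by $[-1]$ and the translation generating $\ell_i$. Since $C\to C/\ell_i$ is \'etale of degree $p$, the curve $C/\ell_i$ has genus $p+1$; as $[-1]$ has $6$ fixed points on it (\Cref{odd}), Riemann--Hurwitz gives $g(C_i)=\frac{p-1}{2}$. Moreover $C/\ell_i$ is again a curve of \Cref{cons}, now on the $(1,p)$-polarized surface $A/\ell_i$ with cyclic group $X/\ell_i$, so \Cref{cyclic}(a) applies and gives $J_{C/\ell_i}\sim A\times J_{C_i}^2$. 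Feeding the partition $G=\bigcup_i\ell_i\cup\bigcup_{x}\langle -1\circ t_x\rangle$ (with $t=p^2+p+1$ parts) into \Cref{KR}, again replacing every involution-quotient by $\widetilde C$, produces
\[
J_C^{p^2+p}\sim\prod_{i=1}^{p+1}J_{C/\ell_i}^{\,p}\times J_{\widetilde C}^{2p^2}\sim A^{p(p+1)}\times\prod_{i=1}^{p+1}J_{C_i}^{2p}\times J_{\widetilde C}^{2p^2}.
\]
Comparing this with $J_C^{p^2+p}\sim(A\times J_{\widetilde C}^2)^{p^2+p}$ and cancelling the common factors $A^{p^2+p}$ and $J_{\widetilde C}^{2p^2}$ leaves $J_{\widetilde C}^{2p}\sim\prod_i J_{C_i}^{2p}$, whence $J_{\widetilde C}\sim\prod_i J_{C_i}$ after cancelling the $2p$-th power.

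I expect the main obstacle to be bookkeeping rather than a single deep step: one must verify that each proposed decomposition of $G$ really is a partition (pairwise trivial intersections covering $G$), track the exponents so that the $J_C$-powers and the auxiliary factors $A$ and $J_{\widetilde C}$ match exactly before cancelling, and justify that $C/\ell_i$ genuinely falls under \Cref{cons} so that \Cref{cyclic}(a) is applicable --- this last point requires checking that the compatible, characteristic-$0$ decompositions fixed in \Cref{cons} descend to $A/\ell_i$. The conceptual crux, which makes all the exponents collapse, is the conjugacy of all the involutions $[-1]\circ t_x$, which holds precisely because $p$ is odd.
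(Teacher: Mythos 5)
Your proof is correct and follows essentially the same route as the paper: both arguments feed partitions of $\langle -1\rangle\rtimes X$ (and its subgroups) into Kani--Rosen, use the conjugacy of all the involutions $[-1]\circ t_x$ to collapse their quotients to $C/\langle -1\rangle$, and invoke \Cref{cyclic}(a) on each line quotient $C/\ell_i$ viewed in the $(1,p)$-polarization of $A/\ell_i$. The only cosmetic difference is that the paper first partitions the translation subgroup $X$ alone into its $p+1$ lines to get $J_C\sim A\times\prod_i J_{C_i}^2$ directly, whereas you derive $J_C\sim A\times J_{C/\langle -1\rangle}^2$ first and then use a single larger partition of the full group to extract $J_{C/\langle -1\rangle}\sim\prod_i J_{C_i}$; the inputs and conclusions are identical.
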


\begin{proof}
  The subgroup $G:=\langle t_{x_1}, t_{x_2}\rangle$ of $\Aut(C)$ has the following partition:
  \[
    \langle t_{x_1}\rangle\cup
    \langle t_{x_2}\rangle\cup
    \langle t_{x_1+x_2}\rangle\cup
\langle t_{x_1+2x_2}\rangle\cup    
\cdots\cup
\langle t_{x_1+(p-1)x_2}\rangle
.    \]
Then \Cref{KR} gives the following isogeny relation:
\begin{equation}\label{power}
J_C^p\times J_{C/G}^{p^2}\sim J_{C/\langle t_{x_1}\rangle}\times
\cdots \times
J_{C/\langle t_{x_1+(p-1)x_2}\rangle}.
\end{equation}
Each curve on the right-hand side in the above isogeny relation is a quotient of $C$ by a cyclic group of order $p$, which is in turn a cyclic cover of degree $p$ of a genus $2$ curve. 

Since the quotient $C/\langle t_x \rangle$ is in the linear system of a $(1,p)$-polarization on $A/\langle x_1 \rangle$, we may apply the argument used in
\Cref{cyclic}(a):
$$J_{C/\langle t_{x_1}\rangle}\sim A/ \langle x_1 \rangle\times J_{C_1}^2\sim
A \times J_{C_1}^2$$
where $C_1=C/\langle t_{px_1},-1 \rangle$ is of genus $\frac{p-1}{2}$.
Performing an analogous decomposition for each cyclic quotient of $C$ and observing that $J_{C/G}\sim A$, we simplify \eqref{power} to
\begin{equation}\label{first.decomp}
  J_C^p\sim A^p\times \prod_{i=1}^{p+1} J_{C_i}^{2p}
\ \ \Rightarrow \ \ 
  J_C\sim A\times \prod_{i=1}^{p+1} J_{C_i}^{2}.
\end{equation}

Now, let $G:=\langle t_{x_1},t_{x_2},[-1] \rangle \leq \Aut(C)$, which has the following partition:
\[
  G= \biggl(\bigcup_{0\leq m,n\leq p-1} \langle [-1]\circ t_{mx_1+nx_2} \rangle\biggr)
  \cup \langle t_{x_1},t_{x_2} \rangle.
  \]
  Applying \Cref{KR} gives the isogeny relation:
\begin{equation}\label{ir}
    J_C^{p^2}\times J_{C/G}^{p^2}\sim J_{C/\langle t_{x_1},t_{x_2} \rangle}^{p^2}
    \times
\prod_{0\leq m,n\leq p-1} J^2_{C/\langle -1\circ t_{mx_1+nx_2}\rangle}.
\end{equation}
Now, in the notation of \Cref{cons}, $J_{C/\langle t_{x_1},t_{x_2} \rangle}\cong J_H\sim A$, and hence $C/G\cong \P^1$. All order $2$ subroups of $G$ are Sylow, hence conjugate to $\langle -1 \rangle$.
Simplifying  \eqref{ir} and combining with \eqref{first.decomp} yields:
\[
J_{C/\langle -1 \rangle}\sim\prod_{i=1}^{p+1}J_{C_i}.\qedhere
\]  
\end{proof}

\subsection{Completely decomposable Jacobians}\label{JC4}

In \cite[\S4]{EkedahlSerre}, Ekedahl and Serre prove the following result using characters. In this section, we examine some cases where this theorem applies using \Cref{KR} and our results on automorphisms and ramification.

\begin{thm}[{{\cite[Prop.~3 and Corollaire]{EkedahlSerre}}}]
Let $H$ be a curve of genus~$2$ and  
let $C\to H$ be a finite, unramified abelian covering of curves whose Galois group has exponent dividing $4$ or $6$. If $J_H$ is isogenous to a product of elliptic curves, then $J_C$ is completely decomposable.
\end{thm}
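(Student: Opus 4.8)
The plan is to run the Kani--Rosen machinery (\Cref{KR}) on the full symmetry group of $C$, exactly as in \Cref{cyclic} and \Cref{prime}, and to induct on the order of the Galois group $N$ of $C\to H$. The first step is to produce the symmetry group. Since $H$ is hyperelliptic, its hyperelliptic involution $\iota$ acts as $-1$ on $J_H$, hence preserves the subgroup of $J_H$ that defines the unramified abelian cover $C\to H$; therefore $\iota$ lifts to an involution of $C$, and together with the deck transformations $\tau_g$ ($g\in N$) it generates the generalized dihedral group $G=\langle\iota\rangle\ltimes N\leq\Aut(C)$. (In the setting of \Cref{cons} one has $N=X$ and $\iota=[-1]$, so this recovers the groups used in \Cref{cyclic} and \Cref{prime}.) Now $G$ carries the uniform partition $G=N\cup\bigcup_{g\in N}\langle\iota\circ\tau_g\rangle$ into $|N|+1$ parts — the single ``rotation'' subgroup $N$, together with the $|N|$ reflection subgroups, which meet $N$ and each other only in the identity. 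Feeding this into \Cref{KR} and using $C/N\cong H$ and $C/G\cong H/\iota\cong\P^1$ gives the isogeny relation
\[
J_C^{\,|N|}\sim J_H^{\,|N|}\times\prod_{g\in N}J_{D_g}^{\,2},\qquad D_g:=C/\langle\iota\circ\tau_g\rangle.
\]

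Because the isogeny category is semisimple (Poincar\'e reducibility), an abelian variety is isogenous to a product of elliptic curves if and only if some positive power of it is; combined with the hypothesis $J_H\sim\prod_iE_i$, the displayed relation reduces the entire theorem to a single claim: each involution quotient $J_{D_g}$ is completely decomposable. This is where I would induct on $|N|$. The ``translation'' quotients are free: each $C/\langle\tau_g\rangle$ is again an unramified abelian cover of $H$, with Galois group $N/\langle g\rangle$ of strictly smaller order and exponent still dividing $4$ or $6$, so by the inductive hypothesis its Jacobian is already completely decomposable. The entire content is therefore to control the $D_g$.

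For the involution quotients I would first cut down the number of cases. Conjugating $\iota\circ\tau_g$ by $\tau_h$ with $2h=g$ turns it into $\iota$ (this is the conjugacy behind \Cref{lem.fix}), so the $D_g$ with $g\in 2N$ are all isomorphic to $C/\langle\iota\rangle$, and only finitely many conjugacy classes of involutions survive, indexed by $N/2N$. For each representative I would compute the genus of $D_g$ by Riemann--Hurwitz: in the abelian-surface case of \Cref{cons} the fixed-point counts are exactly those of \Cref{odd}, \Cref{even}, \Cref{cyclic.fix} and \Cref{twelve}, while in general they come from the six Weierstrass points of $H$ and a lifting computation for how $\iota\circ\tau_g$ permutes the fibers of $C\to H$ over them. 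I would then decompose $J_{D_g}$ by a \emph{second} application of \Cref{KR}, now to the residual automorphism group that $D_g$ inherits from the normalizer of $\langle\iota\circ\tau_g\rangle$ in $G$ (for $\iota$ itself this is the image of $N[2]$; when $6\mid e$ one also uses the image of the odd part $N_3$). These residual groups are again abelian groups of prime exponent or generalized dihedral groups, so they again admit partitions, and since the group order strictly drops the recursion terminates. At the bottom, the involution quotients are curves of genus at most $2$: genus $0$ and $1$ pieces are trivially decomposable, and a genus $2$ quotient carrying an extra (non-hyperelliptic) involution is bielliptic, with $J\sim E'\times E''$ by the classical Klein-four application of \Cref{KR}.

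The exponent hypothesis is precisely what makes the recursion bottom out correctly: dividing $4$ or $6$ is equivalent to $\varphi(e)\le 2$, i.e.\ to $\Q(\zeta_e)$ being $\Q$ or an imaginary quadratic field, which is exactly the range in which the blocks produced by the partitions are forced down to elliptic curves (with complex multiplication by $\Q(i)$ or $\Q(\zeta_3)$) rather than to higher-dimensional simple factors; equivalently, it keeps the genus drops dictated by the fixed-point propositions severe enough that the involution quotients terminate at bielliptic genus $\le 2$ curves. The main obstacle I anticipate is the uniform bookkeeping of this recursion, most delicately in the exponent-$4$ case of large $2$-rank (e.g.\ $N=(\Z/4\Z)^k$), where one must verify that the residual group acting on each successive involution quotient always supplies a partition whose further quotients are either $H$-type curves (handled by the translation induction) or strictly lower genus, so that no indecomposable factor of dimension $>1$ can survive. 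A secondary obstacle is that the explicit fixed-point machinery of \cref{sec_lb} and \cref{sec_invol} is intrinsic to abelian surfaces and hence to Galois groups of rank $\le 2$; for the general statement one must replace it by the abstract Weierstrass-point count above, but the Kani--Rosen skeleton and the inductive structure are identical.
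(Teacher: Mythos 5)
First, note that the paper does not prove this statement at all: it is quoted verbatim from Ekedahl--Serre, who prove it by decomposing $H^1(C)$ under the rational group algebra of the generalized dihedral group (Chevalley--Weil plus the fact that the two-dimensional irreducibles of $D_n$ are realizable over $\Q$ exactly when $\Q(\zeta_n)^+=\Q$, i.e.\ $n\mid 4$ or $n\mid 6$). The paper only \emph{recovers special cases} of it in \cref{JC4} via \Cref{KR}, always with $X$ of rank at most $2$ and order at most $9$. Your opening moves are correct and match that template: the hyperelliptic involution does lift to an involution of $C$ (its square is a deck transformation fixing a point over a Weierstrass point, hence trivial), the generalized dihedral group admits the partition you describe, and \Cref{KR} together with $C/G\cong\P^1$ reduces everything to showing that each involution quotient $D_g=C/\langle\iota\circ\tau_g\rangle$ has completely decomposable Jacobian.

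That last claim, however, is the entire theorem, and your recursion does not establish it --- and cannot in general. Take $N=(\Z/4\Z)^4$ (a legitimate Galois group here, since $\pi_1(H)^{\mathrm{ab}}\cong\Z^4$, and of exponent dividing $4$). Then $g_C=257$ and each $D_g$ has genus roughly $128$, but the residual group acting on $D_g$ is only $C_G(\iota\tau_g)/\langle\iota\tau_g\rangle\cong N[2]\cong(\Z/2\Z)^4$, of order $16$: the translations $\tau_h$ with $2h\neq 0$ do not normalize $\langle\iota\tau_g\rangle$ and so do not descend. Iterating \Cref{KR} through this $(\Z/2\Z)^4$ terminates at quotients $C/K$ with $\langle\iota\tau_g\rangle\le K\le\langle N[2],\iota\tau_g\rangle$, the smallest of which still have genus around $8$ and carry no further automorphisms; no finer partition of $G$ exists either, since for a $2$-group the partition components are forced to be the Hughes subgroup $N$ and the cyclic groups generated by the reflections. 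Conceptually, the part of $J_C$ on which $N$ acts through faithful characters appears in no translation quotient and is invisible to the averaging idempotents $\tfrac1{|H_i|}\sum_{h\in H_i}h$ that \Cref{KR} uses; splitting it requires the full $M_2(\Q)$-blocks of $\Q[G]$, which is exactly Ekedahl--Serre's character argument and is where the exponent hypothesis actually enters. (Your parenthetical that the resulting elliptic factors have CM by $\Q(i)$ or $\Q(\zeta_3)$ is also not right --- the $M_2(\Q)$-block splits the relevant abelian surface as $E^2$ without forcing CM on $E$.) Even in the cases where your recursion does close up (e.g.\ $N=(\Z/2\Z)^4$, where one must check that the six classes indexing the ramified fibers lie on no common affine hyperplane of $N$), the verification is a nontrivial computation with theta characteristics that the proposal does not perform. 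So the proposal is a correct reduction followed by a plan whose essential step fails outside the low-rank cases the paper actually treats.
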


This result implies that if the group $X$ of \Cref{cons} has exponent dividing $4$ or $6$ and $A$ is isogenous to a product of elliptic curves, then $J_C$ is completely decomposable. We now examine the cases where $X$ is cyclic as well as the smallest non-cyclic case, the Klein cover of \cite{BO}.

\subsubsection*{$X$ cyclic}
When $d=2$ or $3$, \Cref{cyclic} directly implies complete decomposability of $J_C$.
When $d=2$, as shown in \cref{hyper.two},
$J_{C/\langle -1 \rangle}$ is elliptic and $J_{C/\langle -1 \circ t_x\rangle}$ is trivial.
When $d=3$,  by \Cref{odd}, $J_{C/\langle -1 \rangle}$ is an elliptic curve (cf.~\cite[\S5]{BLmoduli}).

When $d=4$ and $X\cong \Z/4\Z$, \Cref{cyclic} and \Cref{cyclic.fix} 
imply that $J_C$ may be decomposed into the product of $A$ with a surface 
$J_{C/\langle -1 \rangle}$ and an elliptic curve $J_{C/\langle -1 \circ t_x\rangle}$.
However, ${C/\langle t_{2x} \rangle}$ is a curve in the linear system of a $(1,2)$-polarization and $J_{C/\langle t_{2x},-1 \rangle}$ is an elliptic curve. 
Since ${C/\langle -1 \rangle}$ covers an elliptic curve,
$J_{C/\langle -1 \rangle}$ is isogenous to a product of elliptic curves.

When $d=6$ and $X\cong \Z/6\Z$, \Cref{cyclic} and \Cref{cyclic.fix} 
imply that $J_C$ may be decomposed into the product of $A$ with a threefold 
$J_{C/\langle -1 \rangle}$ and a surface $J_{C/\langle -1 \circ t_x\rangle}$.
Similarly to the previous case, $C/\langle t_{3x} \rangle$ is in a $(1,3)$-polarization and $J_{C/\langle t_{3x},-1 \rangle}$ is an elliptic curve.
Also, $C/\langle t_{2x} \rangle$ is in a $(1,2)$-polarization and 
$J_{C/\langle t_{2x},-1 \rangle}$ is an elliptic curve.
Since ${C/\langle -1 \circ t_{3x}\rangle}$ covers ${C/\langle t_{3x},-1 \rangle}$ and $J_{C/\langle -1 \circ t_x\rangle}\cong J_{C/\langle -1 \circ t_{3x}\rangle}$,
$J_{C/\langle -1 \circ t_x\rangle}$ is isogenous to a product of elliptic curves.
The threefold $J_{C/\langle -1 \rangle}\cong J_{C/\langle -1\circ t_{2x} \rangle}$
covers the distinct elliptic curves 
$J_{C/\langle t_{2x},-1 \rangle}$ and $J_{C/\langle -1 \circ t_{3x}\rangle}$, so is completely decomposable.

\subsubsection*{Klein cover}

Let $d=4$ and $X$ be the Klein group. We may write
$X=\langle x_1 \rangle\oplus\langle x_2 \rangle
\cong (\Z/2\Z)^2$. In this notation, $[-1]\circ t_{x_1+x_2}$ is the hyperelliptic involution (see \cref{14invol}).
Analogously to the proof of \Cref{prime},
the group $G:=\langle t_{x_1},t_{x_2} \rangle\leq \Aut(C)$ admits a partition:
$$G=\langle t_{x_1} \rangle \cup \langle t_{x_2} \rangle \cup \langle t_{x_1+x_2} \rangle.$$
The isogeny relation given by \Cref{KR} simplifies to:
\begin{equation}\label{isog}
  J_C\times A^2\sim J_{C/\langle t_{x_1} \rangle}\times J_{C/\langle t_{x_2} \rangle}\times
J_{C/\langle t_{x_1+x_2} \rangle}.
\end{equation}
There is a cover $C/\langle t_{x_1} \rangle\to H$ given by taking the quotient by $t_{x_2}$. Thus the automorphism group of $C/\langle t_{x_1} \rangle$ contains the order $4$ dihedral group $\langle -1, t_{x_2} \rangle$.
Applying \Cref{KR}, we have:
\[
  J_{C/\langle t_{x_1} \rangle}\sim A/\langle {x_1} \rangle \times J_{C/\langle t_{x_1},-1 \rangle}
\times J_{C/\langle t_{x_1},-1\circ t_{x_2} \rangle}.
\]
Since $\langle t_{x_1}, -1\circ t_{x_2} \rangle$ contains the hyperelliptic involution,
$J_{C/\langle t_{x_1},-1\circ t_{x_2} \rangle}$ is trivial, meaning that 
$J_{C/\langle t_{x_1},-1 \rangle}$ is elliptic.
Arguing similarly
for $C/\langle t_{x_2} \rangle$ and
$C/\langle t_{x_1+x_2} \rangle$, the isogeny relation \eqref{isog} reduces to the following relation between $J_C$ and the product of $A$ with three elliptic curves (see \cite[Theorem~5.5]{BO}), which are each the Jacobian of a quotient of $C$ by a subgroup of $\Aut(C)$ that does not contain its hyperelliptic involution:
\[
  J_C\sim A\times J_{C/\langle t_{x_1},-1 \rangle}\times
J_{C/\langle t_{x_2},-1 \rangle}
\times
J_{C/\langle t_{x_1+x_2}, -1\circ t_{x_1} \rangle}
.
\]

Finally, we note that \Cref{prime} directly implies complete decompsability in the case where $X=(\Z/3\Z)^2$ (see \cite{borowkashatsila} for analysis of this example).

\subsection{Curves that cover an elliptic curve}\label{cover}

We can use our methods to examine the following weaker condition:
does the curve $C$ of \Cref{cons} cover an elliptic curve? The following result gives some cases where this question has a positive answer.

\begin{prop}
When $d=2k$, $3k$, or $4k$ for some $k\in \N$ and $X\cong \Z/j\Z\times \Z/k\Z$ for $j=2$, $3$ or $4$, the curve $C$ of \Cref{cons} covers an elliptic curve. Further the degree of the cover is $d$ and $J_C$ has an elliptic factor in its isogeny class.
\end{prop}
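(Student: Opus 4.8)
The plan is to exhibit the elliptic curve as an explicit quotient $C/G$ for a subgroup $G\le \langle -1\rangle\rtimes X\le\Aut(C)$ (\Cref{autos}). Once $C/G$ is shown to have genus $1$, the cover $C\to C/G$ has degree $|G|$, and since a nonconstant map from $C$ to an elliptic curve $E$ realizes $E$ as an isogeny factor of $J_C$ through pullback of differentials (consistent with \Cref{KR}), the elliptic factor of $J_C$ is immediate. So the whole problem reduces to producing, for each admissible $(j,k)$, a subgroup $G\le\langle -1\rangle\rtimes X$ with $C/G$ of genus $1$, and to computing $|G|$.

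I would build $G$ in two stages. Writing $X=\langle x_1\rangle\times\langle x_2\rangle$ with $x_1$ of order $j$ and $x_2$ of order $k$, first descend by a subgroup of translations $T\le X$. Because $C\to C/X\cong H$ is \'etale, $T$ acts freely on $C$, so $C/T$ is smooth; by \Cref{lem.pol} it lies in the linear system of the $(1,\tfrac{d}{|T|})$-polarization on $A/T$ obtained by pushforward, and the compatible decompositions fixed in \Cref{cons} show that $C/T$ is again a curve of the type in \Cref{cons}, now for the residual group $X/T$. I then set $G=\langle T,[-1]\rangle$, so that $C/G=(C/T)/\langle -1\rangle$ and $|G|=2|T|$.

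The genus of $C/G$ is forced by Riemann--Hurwitz together with the fixed-point count of $[-1]$ on $C/T$. If $A/T$ carries a $(1,d')$-polarization then $C/T$ has genus $d'+1$, and $(C/T)/\langle -1\rangle$ has genus $1$ exactly when $[-1]$ has $2d'$ fixed points on $C/T$; by \Cref{eigen}(a) the class of $C/T$ lies in the $(+1)$-eigenspace, so this count is read off from \Cref{dimcount}. I therefore take $d'$ as small as possible. When $d$ is even I choose $T$ of index $2$ in $X$ --- concretely $\langle x_2\rangle$ for $j=2$, $\langle 2x_1,x_2\rangle$ for $j=4$, and $\langle x_1,2x_2\rangle$ for $j=3$ (here $k$ is even) --- so that $A/T$ is $(1,2)$-polarized, $[-1]$ has $4=2d'$ fixed points (\Cref{dimcount}, or \Cref{cyclic.fix}(b)), and $|G|=2|T|=d$: the cover $C\to C/G$ then has degree exactly $d$. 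When $d$ is odd (necessarily $j=3$ with $k$ odd) no index-$2$ subgroup of $X$ exists, so I take $T=\langle x_2\rangle$, giving a $(1,3)$-polarization on $A/T$; by \Cref{odd} the involution $[-1]$ has $6=2d'$ fixed points, so $(C/T)/\langle -1\rangle$ is again elliptic, now covered with degree $2k$.

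The step I expect to be the main obstacle is the bookkeeping that makes the fixed-point results of \cref{sec_lb} applicable to $C/T$: one must check that pushing the polarization forward along $A\to A/T$ produces a \emph{symmetric} line bundle of characteristic $0$ with respect to a decomposition of $A/T$ compatible with the chosen one on $A$, so that $C/T$ is genuinely even, carries a symmetric theta structure, and the eigenvalue and parity computations of \Cref{eigen} and \Cref{dimcount} transfer verbatim. Granting this descent, the genus computation and the identification of the degree (equal to $d$ precisely when $d$ is even) are routine, and the elliptic factor of $J_C$ follows formally from the existence of the cover.
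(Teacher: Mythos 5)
Your argument is correct in substance, but it reaches the elliptic quotient by a different mechanism than the paper. The paper's proof quotients only by the order-$k$ cyclic factor $\langle t_{x_1}\rangle$, lands on a curve $\widetilde{C}$ in a $(1,j)$-polarization for $j=2,3,4$, applies the Kani--Rosen decomposition (via \Cref{cyclic}) to the dihedral group $\langle -1,t_{x_2}\rangle\le\Aut(\widetilde{C})$, and detects the elliptic factor by comparing dimensions in the resulting isogeny relation; the factor is then transported to $J_C$ via $J_C\sim J_{\widetilde{C}}\times P(C/\widetilde{C})$. You instead bypass Kani--Rosen entirely: for even $d$ you pick an index-$2$ translation subgroup $T\le X$ so that $C/T$ sits in a $(1,2)$-polarization, and you compute the genus of $(C/T)/\langle -1\rangle$ directly from Riemann--Hurwitz and the fixed-point counts of \cref{sec_lb}. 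This buys you two things the paper's version does not deliver as written: you identify \emph{which} quotient is elliptic (the paper's dimension count for $j=2$ only shows that one of two candidates is), and your $G=\langle T,[-1]\rangle$ genuinely has order $d$, so the degree-$d$ claim of the statement actually comes out of your construction for all even $d$ --- whereas the cover the paper exhibits for $j=3,4$ is the composite $C\to\widetilde{C}\to\widetilde{C}/\langle\iota\rangle$ of degree $2k$, not $3k$ or $4k$ as asserted there. Your honest observation that odd $d=3k$ only yields a degree-$2k$ cover (since $\langle -1\rangle\rtimes X$ has no subgroup of odd order $3k$ other than $X$ itself) flags a real discrepancy with the stated degree, and is not a defect of your proof. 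The one step you leave as an assumption --- that the polarization, symmetric theta structure, parity and eigenvalue data descend to $C/T$ on $A/T$ so that \Cref{dimcount} and \Cref{cyclic.fix} apply there --- is exactly the descent the paper also uses without further comment (e.g.\ in the proof of \Cref{prime}, ``the quotient $C/\langle t_{x}\rangle$ is in the linear system of a $(1,p)$-polarization''), so you are on equal footing; just make sure your chosen $T$ is compatible with the decomposition $K_1\oplus K_2$ fixed in \Cref{cons}, which it is for the generators you list.
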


\begin{proof}
  Let $d=2k$ and let $x_1,x_2\in K(L)$ be elements of orders $k$ and $2$ so that
  $X=\langle x_1,x_2 \rangle\leq K(L)$ is a subgroup of order $2k$.

  The curve $\widetilde{C}:=C/\langle t_{x_1} \rangle$ has genus $3$ and quotienting by $t_{x_2}$ gives a cover $\widetilde{C}\to H$, and thus the automorphism group of $\widetilde{C}$ contains $\langle t_{x_2},-1 \rangle\cong D_4$. Applying
  \Cref{KR}, we have:
  \[
J_{\widetilde{C}}\sim A/\langle x_1 \rangle \times J_{\widetilde{C}/\langle -1 \rangle}\times J_{\widetilde{C}/\langle -1\circ t_{x_2} \rangle}.
\]
Comparing dimensions, we see that one of
$\widetilde{C}/\langle -1 \rangle$ or
$\widetilde{C}/\langle -1\circ t_{x_2} \rangle$ must be an elliptic curve $E$ and the covering map $C\to E$ has degree $2k$.

Since $J_C\sim J_{\widetilde{C}}\times P(C/\widetilde{C})$, where
$P(C/\widetilde{C})$ is the Prym variety, $E$ appears as an isogenous factor in the decomposition of $J_C$.

For $d=3k$, we make the same argument with $x_2$ an element of order~$3$. Then $\widetilde{C}:=C/\langle t_{x_1} \rangle$ is a curve of genus $4$ whose automorphism group contains $\langle -1, t_{x_2} \rangle\cong D_6$ and
$J_{\widetilde{C}}\sim A/\langle x_1 \rangle\times J_{C/\langle -1 \rangle}^2$, thus
$\widetilde{C}/\langle -1 \rangle =:E$ is elliptic, $C\to E$ has degree $3k$, and $E$ is an isogenous factor of $J_C$.

For $d=4k$, we let $x_2$ have order $4$.
The curve $\widetilde{C}:=C/\langle t_{x_1} \rangle$ has genus~$5$ and its automorphism group contains $D_8$. The Jacobian $J_{\widetilde{C}}$ thus also has an elliptic factor $E$ where $C\to E$ has degree $4k$.
\end{proof}

\bibliographystyle{alpha}
\bibliography{main}

\end{document}